\theoremstyle{definition}
\newtheorem{thm}{Theorem}[section]
\newtheorem{defn}[thm]{Definition}
\newtheorem{cor}[thm]{Corollary}
\newtheorem{lem}[thm]{Lemma}
\newtheorem{rmk}[thm]{Remark}
\newtheorem{ex}[thm]{Example}
\theoremstyle{definition}
\def\CC{{\mathbb C}}
\def\NN{{\mathbb N}}
\newcommand{\ds}{\displaystyle}
\def\<{\langle}
\def\>{\rangle}
\newcommand{\myTileslevel}[3]{%
	\foreach \X[count=\Z] in {#1}
	{\draw[thick,fill=\X] (#2*\Z,#3) -- (#2*\Z + #2,#3) -- (#2*\Z + #2,#3+#2) -- (#2*\Z,#3+#2) -- cycle;
	   \node at (#2*\Z + 0.5,-0.4) {\Z};
	}
}
\begin{document}
	
	\title{Tiling proofs of Jacobi triple product and Rogers-Ramanujan identities}

	\author{Alok Shukla}
	\address{Mathematical and Physical Sciences Division \\ School of Arts and Sciences \\ Ahmedabad University, India \\ alok.shukla@ahduni.edu.in, sajal.eee@gmail.com}
	\date{\today}
	\subjclass{11P84, 05A17, 05A19, 11P81}
	\thanks{}

\begin{abstract}
	We use the method of tiling to give elementary combinatorial proofs of some celebrated $q$-series identities, such as Jacobi triple product identity, Rogers-Ramanujan identities, and some identities of Rogers. We give a tiling proof of the q-binomial theorem and a tiling interpretation of the q-binomial coefficients. A new generalized $ k $-product  $q$-series identity is also obtained by employing the `tiling-method', wherein the generating function of the set of all possible tilings of a rectangular board is computed in two different ways to obtain the desired  $q$-series identity. Several new recursive $ q$-series identities were also established. The `tiling-method' holds promise for giving an aesthetically pleasing approach to prove old and new $q$-series identities.
	\end{abstract}	
	
	\maketitle
		
\section{Introduction}
	
	Jacobi triple product identity and Rogers-Ramanujan identities are among the most famous $q$-series identities. Several proofs and generalizations, including Lie algebraic interpretations of these identities, are available in the literature. Besides their importance in number theory, surprisingly, Rogers-Ramanujan identities also appear in the Hard Hexagon model in statistical mechanics \cite{baxter1981rogers}.  
	
	Recently, a beautiful `tiling proof' of Euler's Pentagonal Number Theorem by Eichhorn, Nam, and Sohn appeared in \cite{eichhorn2019tiling}. The proof in \cite{eichhorn2019tiling} was inspired by works of Little and Sellers in \cite{MR2469260}  and \cite{MR2587022}. Little and Sellers used a new combinatorial method for proving partition identities. They used tilings of a $1 \times \infty$ board with squares and dominoes.
	The main idea in \cite{eichhorn2019tiling}, \cite{MR2469260}  and \cite{MR2587022} is to compute the generating function of the set of all tilings of a board in two different ways to establish a $q$-series identity.
	These tiling proofs are quite elementary and provide an appealing method to revisit some old classical $ q $-series identities.
	
	The authors stated in the conclusion section of their paper \cite{eichhorn2019tiling} that it remains an open problem whether or not Jacobi's triple product identity can be obtained using the method of tiling. 	We will use the method of tiling to prove Jacobi triple product identity. Moreover, we will also prove Rogers-Ramanujan identities using a similar method. In the course of proving these results, we will also prove the $ q $-binomial theorem considering tilings of a $ 1 \times N $ board with black and white tiles. A tiling interpretation of the $ q $-binomial coefficients will also be given.  The $ q $-binomial theorem  will be employed, along with an idea due to Cauchy, to prove Jacobi triple product identity. The idea of Cauchy \cite{cauchy1893second} was to use a finite version of the Jacobi triple product identity from which the main result is obtained by taking an appropriate limit. Our proof of Rogers-Ramanujan identities will also follow the same approach and it will closely follow \cite{bressoud1983easy}.  
	
	We acknowledge that for some of the results presented here, elementary partition theoretic proofs are known, and for some others, such as the identities of Rogers proved in Section \ref{Sec_Rogers}, tiling proofs are also known. Still, a unified approach will be presented here to prove these results using the method of tiling. Besides proving some old classic results, we will also prove new results.
	
	The concept of the `level of a tiling' will be introduced in Section \ref{Sec_Ramanujan}. The following result will be proved in Theorem \ref{thm_rogers_ramanujan_main} using the `level of a tiling'. 
      
      	\begin{equation*}
      		\prod_{j=1}^{N} \frac{1}{1 - z_j} = 1 + \sum_{j=1}^{N} \, \,  \frac{h_j(z_j, z_{j+1}, \cdots, z_N) }{(1-z_1) (1-z_2) \ldots (1-z_j)},
      	\end{equation*}
      	where, 
      	\begin{equation*}
      		h_j(z_j, z_{j+1}, \cdots, z_N)	:= \sum_{ \substack{
      				0 \leq \,  \beta_j, \beta_{j+1}, \cdots, \beta_N \leq \,  j \\  \sum_{k=j}^{N} \beta_k = j
      		}}   \quad  \,  {\prod_{k=j}^{N} z_k^{\beta_k} },
      	\end{equation*}
      	and $ z_j \in \CC $ with  $ |z_j| < 1  $ for $ 1 \leq j \leq N $.

	We will also prove the following new $ k $-product identity (Corollary~\ref{cor_main_recur_k}, Theorem~\ref{thm_main_new_recur_k}) using the method of tiling. 
	\begin{align*}
	\prod_{m=1}^{\infty} (1 + z_1 q^m) (1 + z_2 q^m) \cdots  (1 + z_k q^m) &= \sum_{m=0}^{\infty} 	\frac{q^{m} }{(1-q)(1-q^2) \cdots (1-q^m)} P(m),
	\end{align*}
	where, $ P(m) $ satisfies the following recurrence 
	\begin{equation*}
	P(m) = \sum_{i=1}^{k} \frac{q^{m-i} (q)_{m-1}}{(q)_{m-i}} \, S_i^k (z_1, z_2, \cdots, z_k) P(m-i).
	\end{equation*}
	Here, the notation $ (q)_m $ is defined in \eqref{eq_q_n}, and  $S_n^k (z_1, z_2, \cdots, z_k) $ is the elementary symmetric polynomial of degree $ n $, in variables $ z_1, z_2, \cdots, z_k $, precisely defined in \eqref{eq_elem_symmetric_poly}.
	We note that $ P(m) $ can alternately be written as $ P(m) = q^{-m} (q)_m F(m)  $, with $ F(m) $ given in a closed form in \eqref{eq_new_F_m} in the statement of Theorem~\ref{thm_main_new_recur_k_alternate}.

	All the proofs presented in this work are elementary and use combinatorial arguments involving tilings of different types of boards. The only analytical result needed will be Tannery’s theorem (see Sect.~3.7.1, \cite{loya2017amazing}), which is just a special case of the Weierstrass M-test, or the Lebesgue dominated convergence theorem.

\section{Basic setup and preliminary examples} \label{sec_basic_setup}

In the following we will use the usual $ q $-series notation 
\begin{equation}\label{eq_def_q_notation}
(z;q)_n := \prod_{m=0}^{n-1} (1 - zq^m), \quad {\text{and}} \quad (z;q)_{\infty} := \lim\limits_{n \to \infty} (z;q)_n.
\end{equation}
Here, and in the rest of the paper, $ z $ and $ q $ will be complex numbers with $ |q| < 1 $ and $ z \neq 0 $.
To ease the notation we will use 
\begin{equation}\label{eq_q_n}
(q)_n := (q;q)_n = \prod_{m=1}^{n} (1 - q^m), \quad {\text{and}} \quad (q)_{\infty} := \lim\limits_{n \to \infty} (q)_n.
\end{equation}
The \emph{Gaussian polynomial} or $ q $-binomial coefficient $ \left[\substack{N \\ \\ m } \right] $ will be defined as
\begin{equation}\label{eq_Gaussian_polynomial}
\left[\substack{N \\ \\ m } \right] := \begin{cases}
&\frac{(q)_N}{(q)_m (q)_{N-m}}  \quad \text{if $ 0 \leq m \leq N $,}\\
& 0 \quad \text{otherwise.}
\end{cases}
\end{equation}
 
We will consider the tilings of a $ 1 \times N $ board, i.e., a board with $ 1 $ row and $ N $ columns, with white and black square tiles. A tiling $ T $ of $ 1 \times N $ board consists of $ N $ square tiles covering the board, such that each of the $ N $ squares in a tiling could be either a white square or a black square, with exactly one square tile at each position. If $ N $ is $ \infty $, then we further require that \emph{each tiling must consist of only a finite number of black squares}. More formally, we have the following definition.
\begin{defn}
	A tiling  $ T $ of a $ 1 \times N $ board is a function $ T : \{1,2,3, \cdots, N\} \to \{white,black\} $. So, essentially a tiling assigns either a white square or a black square for each position on the board. For a $ 1 \times \infty $ board, a tiling $ T $ must satisfy the additional condition that $ T(i) = black $ for only finitely many positive integers $ i $.
\end{defn}

For a $ 1 \times N $ board $ F $, the associated weight function $ F_w $ assigns a fixed weight $ F_w(i) \in \CC[z,q] $ for each position $ i$ (see Fig.\!~\ref{fig_wieght_board}  for an example.)

\begin{figure}[ht]
	\begin{center}
		\begin{tikzpicture}[scale = 1.4]	
		\foreach \Z in {1,2,3}
		{
			\draw[thick] (\Z,0) -- (\Z + 1,0) -- (\Z + 1,1) -- (\Z,1) -- cycle;
			\node at (\Z + 0.5,0.5) {$ zq^{\Z} $};
			\node at (\Z + 0.5,-0.4) {$ \Z$};
		}
		\foreach \x in {4,5,6}
		\draw[thick,dashed] (\x,0) -- (\x + 1,0) -- (\x + 1,1) -- (\x,1) -- cycle;
		
		\foreach \x in {7,8}
		\draw[thick] (\x,0) -- (\x + 1,0) -- (\x + 1,1) -- (\x,1) -- cycle;
		
		\node at (5 + 0.5,0.5) {$ zq^{i} $};
		\node at (5 + 0.5,-0.4) {$ i$};
		\node at (7 + 0.5,0.5) {$ zq^{N-1} $};
		\node at (7 + 0.5,-0.4) {$ N-1$};
		\node at (8 + 0.5,0.5) {$ zq^{N} $};
		\node at (8 + 0.5,-0.4) {$ N$};
		\end{tikzpicture}
	\end{center}
	\caption{A $ 1 \times N $ board with its associated weight function $ F_w(i) = zq^i$.}\label{fig_wieght_board}
\end{figure}

\begin{defn} \label{def_local_weight}
	For a tiling $ T $ of $ F $, its local weight $ W_T(i) $ at the position $ i $ is defined as 
	\begin{equation}\label{eq_WT_i}
	W_T(i) :=  \begin{cases} F_w(i)  & \text{if the tile at  $ i $-th position is black, i.e., if $T(i) = black$,} \\
	1 & \text{if the tile at $ i $-th position is white, i.e., if $T(i) = white$.} \end{cases}
	\end{equation}

The total weight of a tiling $T$ is defined as 
\begin{equation}\label{eq_weight}
W(T) := \prod_{i=1}^{N} W_T(i),
\end{equation}
the product of its local weights at all the positions on the board. This definition can naturally be extended to a $ 1 \times  \infty $ board with an infinite product of local weights. Since in any tiling $ T $ only finitely many tiles are black, and white tiles contribute $ 1 $, the infinite product of local weights  makes sense. Also, since effectively only black tiles contribute to the total weight, in the following we will only consider the placement of the black tiles to construct a tiling.
\end{defn}

\begin{ex} \label{example1}
	Consider a $ 1 \times 10 $ board $ B $ with the associated weight function $ F_w(i) = zq^i $. Let $ T $ be the tiling of $ F $ with $ 3 $ black squares, each at positions $2,4$, and $7$, see Fig.\!~\ref{fig_example}.
	Since the white tiles at the remaining positions have weight $1$, they do not contribute to the total weight of the tiling $ T $. 
	Therefore, the total weight of the tiling $ T $ is 
	$ \displaystyle
W(T) = zq^2 \times zq^4 \times zq^7 = z^3 q^{13}.
	$
\end{ex}

\begin{figure}
	\begin{center}
		\begin{tikzpicture}[scale=1.3]
		\foreach \i in {1,2,3,...,10}
		\node at (\i+0.5,1.3) {$ zq^{\i} $};
		\myTileslevel{white,black,white,black,white,white,black,white,white,white}{1}{0};
	\end{tikzpicture}
	\end{center}
\caption{The total weight of the tiling $ T $ of the $ 1 \times 10 $ board shown in the figure is $ zq^2 \times zq^4 \times zq^7 = z^3 q^{13}  $. Note that only the positions with black tiles contribute in the total weight.}\label{fig_example}
\end{figure}

\begin{defn}\label{def_generating_fun}
	We define the \emph{generating function} of all tilings of a $1 \times N$  board $ F $ as
	$$
	H(F, z, q) := \sum_{\text{tilings} \ T \ \text{of F}} W(T).
	$$
\end{defn}
Suppose, the board $ F $ has the associated weight function $ F_w(i) = zq^i $. We note that the contribution to the total weight of the tiling of the tile at position $i$ must be either $1$ or $zq^i$, therefore
\begin{equation}\label{simpleF}
H(F, z, q) = \prod_{i=1}^{N} (1+zq^i) = (-zq;q)_{N} = (-zq)_N.
\end{equation}
We obtain, \begin{equation}\label{eq_gen_fun_finite}
	(-zq;q)_{N} = \sum_{\text{tilings} \ T \ \text{of F}} W(T).
\end{equation}
This simple relation will play an important role in the rest of this paper. Also, clearly for a $ 1 \times \infty $ board $ F $ the following holds
  \begin{equation}\label{eq_gen_fun_infinite}
 (-zq;q)_{\infty} = \sum_{\text{tilings} \ T \ \text{of F}} W(T).
 \end{equation}
 
\begin{defn} \label{def_bmk_bm}
	For a $ 1 \times N$ board $ F $, with $ 1 \leq N \leq \infty$, we define
	\begin{equation}{\label{eq_b_k_m}}
	F(k,m) :=	\sum_{\substack{T \ \text{is a tiling of} \ F \ \\ \text{with exactly} \ m \ \text{black tiles} } }  \prod_{i=k}^{N} W_T(i),
	\end{equation}
	where $W_T(i)$ is the same as defined in \eqref{eq_WT_i}. One can think of $ F(k,m) $ as the generating function for all the possible tilings of the board $ F $ using $ m $ black tiles starting at the position $ k $, i.e., from the position $ 1 $ to $ k-1 $ no black tiles are allowed to be placed. 
	We define 
	\begin{equation}\label{eq_b_m}
	F(m) : = F(1,m)
	\end{equation}
	for easing the notation. Clearly, $ F(m) $ is the generating function for all the tilings of $ F $, using $ m $ black tiles, starting at the position $ k=1 $.
	
\end{defn}

\subsection{An identity of Euler}
Consider a $ 1 \times \infty $ board $ F $ with the associated weight function $ F_w(i) = zq^i $. Clearly, 
the generating function of all the possible tilings of $ F $ is given by
\begin{equation}\label{simpleFone}
H(F, z, q) = \prod_{i=1}^{\infty} (1+zq^i) = (-zq;q)_{\infty} = (-zq)_{\infty}.
\end{equation}
Now we find this generating function in another way by first computing $ F(m) = F(1,m) $, the generating function for all the tilings of $ F $, using $ m $ black tiles. For this, we fix the first tile to be black with contribution $ zq $ and the remaining $ m-1 $ black tiles are placed with the corresponding generating function $ F(2,m-1) $. The total contribution to the generating function in this case is $ zq F(2,m-1)  $.  Next, we consider the case when the first tile is white. The total contribution in this case is $ F(2,m) $.
Therefore, we have
\begin{align}\label{eq_euler_recur_one}
 F(1,m) = zq F(2,m-1) + F(2,m).
\end{align}
If a black tile is shifted  one position to the right of its current position, then it will gain weight by a factor of  $q $. Therefore, if a tiling $ T $,  containing $ m $ black tiles, is shifted one position to its right, without changing the relative positions of the black tiles in it, then the weight of $ T $ will change by a factor of $ q^{m} $. Hence, the effect of multiplication by $ q^{m} $ on $ F(1,m) $ is to give the generating function of $ F(2,m) $. Therefore, we have
\begin{equation}\label{eq_euler_recur_two}
F(2,m) = q^m F(1,m),
\end{equation}
and, similarly 
\begin{equation}\label{eq_euler_recur_general}
	F(k+1,m) = q^{km} F(1,m).
\end{equation}
From \eqref{eq_euler_recur_one} and \eqref{eq_euler_recur_two} it follows that
\begin{align} \label{eq_euler_recur_relation}
F(1,m)&= zq^m F(1,m-1) + q^m F(1,m) \nonumber\\
& \implies F(1,m) =  \frac{zq^m}{1-q^m} F(1,m-1).
\end{align}
Since, $ F(1,1) = zq + zq^2 + zq^3 + \cdots = zq(1-q)^{-1} $, we get 
\begin{equation} \label{eq_euler_identity_m}
F(1,m) =  \frac{z^m q^{\frac{m(m+1)}{2}} }{(1-q)(1-q^2)\cdots (1-q^m)},
\end{equation}
and \begin{equation}\label{eq_Euler}
H(F, z, q) = \prod_{i=1}^{\infty} (1+zq^i)  = \sum_{m=0}^{\infty}  \frac{z^m q^{\frac{m(m+1)}{2}} }{(1-q)(1-q^2)\cdots (1-q^m)},
\end{equation}
which proves an identity of Euler using the tiling method.  This identity is also proven in \cite{eichhorn2019tiling} using the method of tiling, however with a slightly different approach. 

\begin{rmk}
	We give another combinatorial  interpretation of \eqref{eq_euler_recur_relation}. 
For this, we claim that
	\begin{align}\label{eq_remark_combinatorial_meaning}
		F(1,m) = zq F(2,m-1) + z q^2 F(3,m-1) + z q^3 F(4, m-1) + \cdots .
	\end{align}
Here $ F(1,m) $ is the generating function for all the tilings of the board $ F $ with  exactly $ m $ black tiles. We note that the first black tile in any tiling can occur at any of the positions $ 1, 2, 3 , \ldots $. It is clear that the $ k $-th term in the sum on the right side, i.e.,  $ z q^k F(k+1,m-1) $, is the generating function of all the tilings of the board $ F $ such that the first black tile occurs at the $ k$-th position on the board. This proves our claim in \eqref{eq_remark_combinatorial_meaning}. Using  $  F(k+1,m-1) = q^{k(m-1)} F(1,m-1) $ from \eqref{eq_euler_recur_general}  in \eqref{eq_remark_combinatorial_meaning}, for $ k=1,2,3,\ldots $, factoring out $ F(1,m-1) $ and computing the resulting geometric series yields $ \eqref{eq_euler_recur_relation} $.
\end{rmk} 
 
\section{Jacobi triple product identity}
	\begin{figure}[ht]
	\begin{center}
		\begin{tikzpicture} [scale =1.4]	
		\foreach \Z in {1,2,3}
		{   \pgfmathtruncatemacro{\i}{\Z-1};
			\draw[thick] (\Z,1) -- (\Z + 1,1) -- (\Z + 1,2) -- (\Z,2) -- cycle;
			\node at (\Z + 0.5,1.5) {$ z^{-1}q^{\i} $};
		}
		\foreach \x in {4,5,6}
		\draw[thick,dashed] (\x,1) -- (\x + 1,1) -- (\x + 1,2) -- (\x,2) -- cycle;
		
		\foreach \x in {7,8}
		\draw[thick] (\x,1) -- (\x + 1,1) -- (\x + 1,2) -- (\x,2) -- cycle;
		
		\node at (5 + 0.5,1.5) {$ z^{-1}q^{i-1} $};
		\node at (7 + 0.5,1.5) {$ {\footnotesize z^{-1}q^{N-2} }$};
		\node at (8 + 0.5,1.5) {${\footnotesize  z^{-1}q^{N-1}} $};

		\foreach \Z in {1,2,3}
		{
			\draw[thick] (\Z,0) -- (\Z + 1,0) -- (\Z + 1,1) -- (\Z,1) -- cycle;
			\node at (\Z + 0.5,0.5) {$ zq^{\Z} $};
			\node at (\Z + 0.5,-0.4) {$ \Z$};
		}
		\foreach \x in {4,5,6}
		\draw[thick,dashed] (\x,0) -- (\x + 1,0) -- (\x + 1,1) -- (\x,1) -- cycle;
		
		\foreach \x in {7,8}
		\draw[thick] (\x,0) -- (\x + 1,0) -- (\x + 1,1) -- (\x,1) -- cycle;
		
		\node at (5 + 0.5,0.5) {$ zq^{i} $};
		\node at (5 + 0.5,-0.4) {$ i$};
		\node at (7 + 0.5,0.5) {$ {\footnotesize zq^{N-1}} $};
		\node at (7 + 0.5,-0.4) {$ N-1$};
		\node at (8 + 0.5,0.5) {$ zq^{N} $};
		\node at (8 + 0.5,-0.4) {$ N$};
		
		\node at (0.4,0.4) {$ G$};
		\node at (0.4,1.4) {$ F$};
		
		\end{tikzpicture}
	\end{center}
	\caption{A $ 1 \times N $ board $ F $ placed on the top of another $ 1 \times N $ board $ G $.  The weight functions of $ F $ and $ G $ are given by $ F_w(i) = z^{-1}q^{i-1} $  and $ G_w(i) = zq^i$ respectively}\label{fig_wieght_boards_AB}
\end{figure}
In this section, we will prove Jacobi's triple product identity. But, first, we give a combinatorial `tiling-proof' of the $ q $-binomial theorem.
  \begin{thm} \label{thm_bm}[q-Binomial Theorem]
  	Let $ F $ be a $ 1 \times N $ board with the associated weight function $ F_w(i) = zq^i $. Suppose $ F $ is tiled using $ m $ black tiles with $ 0 \leq m \leq N $. Then \begin{equation}\label{eq_A_m}
  F(m) :=	\sum_{\substack{T \ \text{is a tiling of} \ F \ \\ \text{with exactly} \ m \ \text{black tiles} } } W(T) = z^m q^{\frac{m(m+1)}{2}} 	\left[\substack{N \\ \\  m} \right],
  	\end{equation}
  	where $ W(T) $ is the total weight of the tiling $ T $ as defined in \eqref{eq_weight}. Therefore,
  \begin{equation}\label{key}
  	\prod_{m=1}^{N}\, (1 + z q^m) = \sum_{m=0}^{N} z^m q^{\frac{m(m+1)}{2}} 	\left[\substack{N \\ \\  m} \right].
  \end{equation}
  \end{thm}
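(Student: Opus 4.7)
The plan is to adapt the Euler-identity argument from Section~2.1 to a finite board, derive a two-term recurrence in $N$ and $m$, and recognise it as the classical $q$-Pascal recurrence for Gaussian polynomials. To emphasise dependence on board length, I write $F_N(m)$ and $F_N(k,m)$ in place of $F(m)$ and $F(k,m)$ when the underlying board has length $N$.

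First, I would condition on the tile at position~$1$. If it is white, the remaining contribution to $F_N(m)$ is $F_N(2,m)$; if it is black (local weight $zq$), the remaining contribution is $zq\,F_N(2,m-1)$. Hence
\begin{equation*}
F_N(m) \,=\, F_N(2,m) \,+\, zq\,F_N(2,m-1).
\end{equation*}
A shift argument analogous to the one used for \eqref{eq_euler_recur_two} then applies: a tiling of positions $2,\dots,N$ with $\ell$ black tiles becomes, after a one-column left shift, a tiling of a $1\times(N-1)$ board with $\ell$ black tiles, under which each black tile loses a factor of $q$. Therefore $F_N(2,\ell)=q^\ell F_{N-1}(\ell)$, and substituting yields
\begin{equation*}
F_N(m) \,=\, q^m F_{N-1}(m) \,+\, z q^m F_{N-1}(m-1).
\end{equation*}

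Next, I would prove \eqref{eq_A_m} by induction on $N$. The base cases are the all-white tiling giving $F_N(0)=1$ and the trivial $F_N(m)=0$ for $m>N$ (no room for $m$ black tiles), matching $\left[\substack{N \\ \\ 0 }\right]=1$ and $\left[\substack{N \\ \\ m }\right]=0$ for $m>N$ from \eqref{eq_Gaussian_polynomial}. For the inductive step, after dividing the recurrence through by $z^m q^{m(m+1)/2}$, what must be verified is the classical $q$-Pascal identity
\begin{equation*}
\left[\substack{N \\ \\ m }\right] \,=\, q^m \left[\substack{N-1 \\ \\ m }\right] \,+\, \left[\substack{N-1 \\ \\ m-1 }\right],
\end{equation*}
which is a short manipulation of \eqref{eq_Gaussian_polynomial}. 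The product formula in the theorem then follows by summing \eqref{eq_A_m} over $0\leq m\leq N$ and comparing with \eqref{simpleF}, which already equates the total tiling generating function with $\prod_{m=1}^{N}(1+zq^m)$.

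The main potential obstacle is bookkeeping in the shift step: one must confirm that the weight profile $zq^2,\,zq^3,\,\dots,\,zq^N$ on positions $2,\dots,N$ becomes, after a left shift, exactly $q$ times the weight profile $zq,\,zq^2,\,\dots,\,zq^{N-1}$ of a $1\times(N-1)$ board, so that each of the $\ell$ black tiles contributes one factor of $q$. Once $F_N(2,\ell)=q^\ell F_{N-1}(\ell)$ is established, the rest is a routine induction driven entirely by $q$-Pascal.
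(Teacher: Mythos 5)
Your proof is correct, but it follows a genuinely different route from the paper's. Both arguments begin with the same first-tile decomposition $F(1,m)=zq\,F(2,m-1)+F(2,m)$. From there the paper does \emph{not} pass to a shorter board: it appends an $(N+1)$-th cell of weight $zq^{N+1}$ and, via the shift argument, obtains a second relation $q^m F(1,m)=F(2,m)+zq^{N+1}F(2,m-1)$ by conditioning on the \emph{last} tile of the extended board. Subtracting eliminates $F(2,m)$ and yields the one-step product recurrence $F(1,m)=zq\,\frac{1-q^{N}}{1-q^{m}}\,F(2,m-1)$, which telescopes directly to the closed form $z^m q^{m(m+1)/2}\left[\substack{N \\ \\ m}\right]$ with no induction and no appeal to any pre-existing identity for Gaussian polynomials. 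You instead use the shift to identify $F_N(2,\ell)=q^{\ell}F_{N-1}(\ell)$ on a board of length $N-1$, arrive at $F_N(m)=q^mF_{N-1}(m)+zq^mF_{N-1}(m-1)$, and close the induction with the $q$-Pascal identity $\left[\substack{N \\ \\ m}\right]=q^m\left[\substack{N-1 \\ \\ m}\right]+\left[\substack{N-1 \\ \\ m-1}\right]$. Your bookkeeping is right (the weight profile on positions $2,\dots,N$ is exactly $q$ times that of the shorter board per black tile, and the powers of $q$ combine as $q^m\cdot q^{(m-1)m/2}=q^{m(m+1)/2}$), and the base cases $F_N(0)=1$, $F_N(m)=0$ for $m>N$ suffice. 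The trade-off: your route is the classical inductive proof and is arguably more familiar, but it requires verifying $q$-Pascal as an external algebraic fact about the quotient of $(q)$-factorials; the paper's two-ended decomposition buys a closed-form product formula purely from tiling manipulations, which is more in the spirit of its stated program of deriving everything from generating functions of tilings computed two ways.
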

\begin{proof}
The result given in \eqref{eq_A_m} is clearly true for $ m =N $ and $ m =0 $, so assume $ 1 \leq m \leq N-1 $.
Recall that $ F(k,m) $ denotes the generating function for all the possible tilings of the board $ F $ using $ m $ black tiles starting at the position $ k $, c.f.~\eqref{eq_b_k_m}. We observe that $ F(m) = F(1,m) $ consists of two types of tilings, with either a black tile or a white tile at the first position. If the first tile is black, then the first position will contribute $ zq $ and the remaining $ m-1 $ tiles will contribute $ F(2,m-1) $ to the generating function, yielding the total contribution $ zq F(2,m-1) $. If the first tile is white, then clearly the $ m $ black tiles will contribute $F(2,m) $ to the generating function. It means we have the following recurrence relation.
\begin{equation}\label{eq_recur_one_B_m}
F(1,m) = zq F(2,m-1) + F(2,m).
\end{equation}
Next, imagine that the original board is extended with an extra position appended to the right. Assume this extra $ (N+1) $-th position has the weight $zq^{n+1}  $. Let us call this new board $ \tilde{F} $. So, $ \tilde{F} $ is a $ 1 \times (N+1)$ board with the associated weight function $ \tilde{F}_w(i) = zq^i $. Let $ \tilde{F}(k,m) $ be defined analogous to \eqref{eq_b_k_m}, i.e., replacing $ F $ by $ \tilde{F} $ and $ N $ by $ N+1 $ in \eqref{eq_b_k_m}.

 We note that if any tiling $ T $ of the board $ F $ with $ m $ black tile is shifted right by one position, then we get a tiling $ \tilde{T} $ of the board $ \tilde{F} $ starting at the second position. Since shifting a black tile by one position by right results in its weight being multiplied by $ q $, and there are $ m $ black tiles in $ T $, it is obvious that $ q^m W(T) = W(\tilde{T}) $. Therefore, $ q^m F(1,m)  = \tilde{F}(2,m)$. The generating function of all the tilings of $ \tilde{F} $, with $ m $ black tile, such that a white tile is at the right most (i.e., $ (N+1) $-th) position, is $ F(2,m) $. Whereas, the generating function of all the tilings of $ \tilde{F} $, with $ m $ black tiles, such that a black tile is at the $ (N+1) $-th position, is $ qz^{N+1} F(2,m-1) $. Therefore, we obtain
\begin{equation}\label{eq_recur_two_B_m}
q^m F(1,m) = F(2,m) + zq^{N+1} F(2,m-1).
\end{equation}
From \eqref{eq_recur_one_B_m} and \eqref{eq_recur_two_B_m} it follows that
\begin{equation}\label{eq_Bm_recur_formula}
F(1,m) = z q \frac{(1-q^{N})}{(1-q^m)} F(2,m-1).
\end{equation}
One can use \eqref{eq_Bm_recur_formula}, replacing $ m $ by $ m-1 $, $ N $ by $ N-1 $, and $ zq $ by $ zq^2 $, to express $ F(2,m-1) $ in term of $ F(3,m-2) $.
A repeated application of \eqref{eq_Bm_recur_formula} and using fact that $ F(m,0) = 1 $ gives
\begin{equation}\label{eq_Bm_recur_formula_final}
F(1,m) =  \prod_{i=0}^{m-1} z q^{i+1} \frac{(1-q^{N -i})}{(1-q^{m-i})} = z^m q^{\frac{m(m+1)}{2}}  \left[\substack{N \\ \\  m} \right],
\end{equation}
and the proof is complete.
\end{proof}
We obtain the following corollary of the above theorem.
\begin{cor} \label{Cor_partition_interpretation}
	\begin{equation}\label{Eq_cor_main}
		 \left[\substack{N \\ \\  m} \right] = \sum_{ \substack{ \alpha_{1} + \alpha_{2} + \cdots + \alpha_{N-m} \leq m, \\ \alpha_{1}, \alpha_{2}, \cdots ,\alpha_{N-m} \in \{0,1,2, \ldots, m\}}} \, q^{\alpha_1 + 2 \alpha_{2} + 3 \alpha_{3} + \cdots + (N-m) \alpha_{N-m}}.
	\end{equation}
\end{cor}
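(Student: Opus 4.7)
The plan is to derive the corollary directly from Theorem~\ref{thm_bm} by giving a second, fully explicit combinatorial evaluation of $F(m)$ and then matching the resulting sum with the right hand side of \eqref{Eq_cor_main} via the partition-conjugation bijection. The key observation is that a tiling of the $1 \times N$ board $F$ with exactly $m$ black tiles is completely determined by recording the positions $1 \le i_1 < i_2 < \cdots < i_m \le N$ of its black tiles, and such a tiling has total weight $z^m q^{i_1 + i_2 + \cdots + i_m}$ (as illustrated in Example~\ref{example1}). Summing over all tilings and then reparametrizing by $i_k = k + \beta_k$, which converts the strict inequalities to weak ones and strips off a factor of $q^{1+2+\cdots+m}$, yields
\begin{equation*}
F(m) \;=\; z^m q^{m(m+1)/2} \sum_{0 \le \beta_1 \le \beta_2 \le \cdots \le \beta_m \le N-m} q^{\beta_1 + \beta_2 + \cdots + \beta_m}.
\end{equation*}
Comparing with the identity $F(m) = z^m q^{m(m+1)/2} \left[\substack{N \\ \\ m } \right]$ given in \eqref{eq_A_m} immediately produces
\begin{equation*}
\left[\substack{N \\ \\ m } \right] \;=\; \sum_{0 \le \beta_1 \le \beta_2 \le \cdots \le \beta_m \le N-m} q^{\beta_1 + \beta_2 + \cdots + \beta_m}.
\end{equation*}

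To finish, I would pass from partitions to their conjugates. For each weakly increasing tuple $(\beta_1, \ldots, \beta_m)$ with values in $\{0, 1, \ldots, N-m\}$, define $\alpha_j := \#\{i \,:\, \beta_i = j\}$ for $j = 1, 2, \ldots, N-m$. This map is a bijection onto the set of tuples $(\alpha_1, \ldots, \alpha_{N-m})$ satisfying $\alpha_j \in \{0, 1, \ldots, m\}$ and $\alpha_1 + \alpha_2 + \cdots + \alpha_{N-m} \le m$; the slack $m - \sum_j \alpha_j$ records precisely the number of indices $i$ with $\beta_i = 0$. Under this bijection one has $\beta_1 + \cdots + \beta_m = \alpha_1 + 2\alpha_2 + \cdots + (N-m)\alpha_{N-m}$, which matches the exponent on the right hand side of \eqref{Eq_cor_main} exactly, completing the proof.

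The main technical point, rather than an obstacle, is the bookkeeping for the conjugation step: one should note that the upper bound $\alpha_j \le m$ in the corollary is actually automatic from $\sum_j \alpha_j \le m$, and that allowing strict inequality $\sum_j \alpha_j < m$ is exactly what accommodates the $\beta_i = 0$ entries in the original weakly increasing sum. Beyond this verification, the argument is an immediate reparametrization of the tiling sum $F(m)$ already analyzed in Theorem~\ref{thm_bm}, so no genuine difficulty arises.
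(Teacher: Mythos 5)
Your proof is correct and is essentially the same as the paper's: both arguments compare each tiling with $m$ black tiles to the ``packed'' tiling at positions $1,\dots,m$ and encode it by the multiset of leftward displacements $i_k-k$, with $\alpha_j$ counting how many tiles are displaced by $j$. The only difference is presentational — the paper phrases this as a physical tile-relocation scheme, while you phrase it as the substitution $i_k=k+\beta_k$ followed by partition conjugation — and your explicit remarks about the slack $m-\sum_j\alpha_j$ and the redundancy of the bound $\alpha_j\le m$ are accurate.
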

\begin{proof}
	Consider the $ 1 \times N $ board $ F $ used in Theorem \ref{thm_bm}. Suppose $ F $ is tiled using $ m $ black tiles with $ 0 \leq m \leq N $. Let $ F(m) $ be as in 
	\eqref{eq_A_m}. Next, we compute $ F(m) $ in a different way. For this, initially we consider the tiling in which all the $ m $ black tiles are placed on the left of the board, with the black tiles occupying the first position through the $ m $-th position  on the board. For convenience, we call this tiling $ I $. Clearly, the total weight of the tiling $ I $, is given by $ W(I)  = z^m q^{\frac{m(m+1)}{2}} $.  Now we describe how any given tiling, say $ J $, can be transformed to the tiling $ I $ by moving the black tiles in $ J $ by using the following scheme. Suppose the board $ F $ is tiled with the tiling $ J $, and a person traverse the board from the right to the left. The first black tile encountered by the person is marked $ m $, the second black tile encountered is marked $ m-1 $ and so on. The $ i $-th black tile met by the person is marked $ m-i+1 $. Finally, the last black tile (the $ m $-th tile) met is  marked $ 1 $. Next, all the black tiles are removed from the board and then re-positioned according to their markings, with the black tile marked $ i $ getting placed at the $ i $-th position on the board, for $ i=1,2,3,\ldots, m $. This transforms the tiling $ J $ to the tiling $ I $. 
	We note that in this scheme any black tile in $ J $ is moved to the left by either $ 0 $, or  $ 1 $, or $ 2 $, $ \cdots $, or $ N-m $ positions. Suppose $ \alpha_{i} $ is the number of tiles that are moved to the left by $ i $ positions. Clearly, $ 0 \leq \alpha_{i} $.   Moreover, $ \alpha_{i} =0  $ if $ i > N-m $, as clearly the maximum possible left movement of a black tile is by $ N-m $ positions.  Also, as there are total $ m $ black tiles we must have $ \sum_{i=0}^{N-m} \alpha_{i} = m $. Since in a tiling moving one black tile to the left by $ i $ positions affects the total weight function by a factor of  $ q^{-i} $, moving $ \alpha_{i} $ black tiles in a tiling by $ i $ positions to the left will affect the total weight function by a factor of $ q^{-i \alpha_{i}} $. It is clear that 
	\begin{align*}
	 	W(I) &= W(J) q^{-\left(\alpha_1 + 2 \alpha_{2} + 3 \alpha_{3} + \cdots + (N-m) \alpha_{N-m}\right)}  \\
		 \implies W(J) &= z^m q^{\frac{m(m+1)}{2}} q^{\alpha_1 + 2 \alpha_{2} + 3 \alpha_{3} + \cdots + (N-m) \alpha_{N-m}},  
	\end{align*}
	where we used the fact that  $ W(I) = z^m q^{\frac{m(m+1)}{2}} $. It is now obvious that
	\begin{align} \label{Eq_cor_fm}
		F(m) =  z^m q^{\frac{m(m+1)}{2}} \sum_{ \substack{\alpha_0 + \alpha_{1} + \alpha_{2} + \cdots + \alpha_{N-m} = m, \\ \alpha_0, \alpha_{1}, \alpha_{2}, \cdots ,\alpha_{N-m} \in \{0,1,2, \ldots, m\}}} \, q^{\alpha_1 + 2 \alpha_{2} + 3 \alpha_{3} + \cdots + (N-m) \alpha_{N-m}}.
	\end{align}
Form \eqref{eq_A_m} and  \eqref{Eq_cor_fm} we obtain
\begin{equation*}
	\left[\substack{N \\ \\  m} \right] = \sum_{ \substack{\alpha_0 + \alpha_{1} + \alpha_{2} + \cdots + \alpha_{N-m} = m, \\ \alpha_0, \alpha_{1}, \alpha_{2}, \cdots ,\alpha_{N-m} \in \{0,1,2, \ldots, m\}}} \, q^{\alpha_1 + 2 \alpha_{2} + 3 \alpha_{3} + \cdots + (N-m) \alpha_{N-m}}, 
\end{equation*}
from which \eqref{Eq_cor_main} follows.
\end{proof}
\begin{rmk}
	Corollary \ref{Cor_partition_interpretation}  gives a tiling proof of the well-known combinatorial fact that  $  \left[\substack{n \\ \\  m} \right]  $ is the generating function of all partitions with at most $ m $ parts, such that the largest part is at most $ n-m $. 	
\end{rmk}

The following $ k $-product formula easily follows from the $ q $-binomial theorem.
\begin{cor}
	\begin{align}\label{eq_thm_q_multinomial}
		&	\prod_{m=1}^{N} (1 + z_1 q^m) (1 + z_2 q^m) \cdots (1+ z_k q^m)  \nonumber \\ 
		& 	\hspace{3cm} = \sum_{m_1, m_2, \cdots, m_k} z_1^{m_1} q^{\frac{m_1^2+m_1}{2}} \left[\substack{N \\ \\ m_1 } \right]  z_2^{m_2} q^{\frac{m_2^2+m_2}{2}} \left[\substack{N \\ \\ m_2 } \right] \cdots z_k^{m_k} q^{\frac{m_k^2+m_k}{2}} \left[\substack{N \\ \\ m_k } \right],
	\end{align}
where the summation is over  $ m_1, m_2, \cdots,m_k $, varying from $ 0$ to $ N $.
\end{cor}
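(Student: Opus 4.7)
The plan is to derive the identity as a purely formal consequence of Theorem~\ref{thm_bm}, by applying the $q$-binomial theorem separately once for each variable $z_1, z_2, \ldots, z_k$ and then multiplying the resulting $k$ identities together. Concretely, applying Theorem~\ref{thm_bm} with $z$ replaced by $z_i$ yields, for each $i \in \{1, 2, \ldots, k\}$,
\begin{equation*}
\prod_{m=1}^{N} (1 + z_i q^m) = \sum_{m_i = 0}^{N} z_i^{m_i} q^{\frac{m_i(m_i+1)}{2}} \left[\substack{N \\ \\ m_i } \right].
\end{equation*}
Taking the product of these $k$ identities and using commutativity to regroup the factors on the left-hand side as $\prod_{m=1}^{N} (1+z_1 q^m)(1+z_2 q^m)\cdots(1+z_k q^m)$, the right-hand side becomes a product of $k$ finite sums, which I would then expand as a single multiple sum indexed by $(m_1, m_2, \ldots, m_k) \in \{0, 1, \ldots, N\}^k$. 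Because each inner sum is finite, no convergence issues arise, and the resulting expression matches the claimed identity term-by-term.

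The corollary is therefore a one-line formal consequence of the previously established $q$-binomial theorem, with no substantive obstacle beyond the bookkeeping of the $k$ summation indices. If one prefers a direct combinatorial derivation consistent with the style of Section~\ref{sec_basic_setup}, it suffices to consider $k$ stacked $1 \times N$ boards $F^{(1)}, F^{(2)}, \ldots, F^{(k)}$ with weight functions $F^{(i)}_w(m) = z_i q^m$, and to compute the joint generating function over independent tilings of all $k$ boards in two different ways: factor-by-factor across the $N$ column positions (giving the left-hand side directly from \eqref{simpleF}), and by conditioning on the number $m_i$ of black tiles appearing in the $i$-th board and then invoking Theorem~\ref{thm_bm} row-by-row (giving the right-hand side). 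Either route delivers the identity essentially for free.
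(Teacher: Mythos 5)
Your proposal is correct, and your ``direct combinatorial derivation'' in the second paragraph --- $k$ stacked $1\times N$ boards with weight functions $F^{(i)}_w(m)=z_i q^m$, with the joint generating function computed once columnwise and once by conditioning on the number of black tiles per board --- is precisely the proof the paper gives. Your lead argument (multiplying $k$ instances of Theorem~\ref{thm_bm} and expanding the product of finite sums) is the same computation stripped of the tiling language, and is equally valid.
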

\begin{proof}
	Let $ F_1$, $ F_2 , \cdots, F_k $, be $ 1 \times N $ boards.  Let the associated weight function for the board $ F_j $ be $ F_{j_w}(i) =  z_j q^i $.
	We can consider all these boards as a single combined board $ F $. 
	The left side of \eqref{eq_thm_q_multinomial} is the generating function of all the possible tilings of the combined board $ F $. On the other hand, any tiling of the board $ F $ will be such that $ m_j $ black tiles are on the board $ F_j $, contributing  $z_j^{m_j} q^{\frac{m_j^2+m_j}{2}} \left[\substack{N \\ \\ m_j } \right]  $ to the generating function. Therefore, the right side of \eqref{eq_thm_q_multinomial} is also the generating function of all the possible tilings of the combined board $ F $.
\end{proof}
Next, we prove the following lemma.
\begin{lem} \label{lem_lem_qbinomial} 
	\begin{equation}\label{eq_lem_qbinomial}
	\left[\substack{2N \\ \\  t } \right] = \sum_{i +j = t} q^{i(N-j)} 	\left[\substack{N \\ \\  i } \right]  	\left[\substack{N \\ \\  j} \right] 
	= \sum_{i =0}^{t} q^{i(N-(t-i))} 	\left[\substack{N \\ \\  i } \right]  	\left[\substack{N \\ \\  t-i} \right].
	\end{equation}
\end{lem}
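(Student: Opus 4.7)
The plan is to prove this identity by computing the generating function of a single board in two different ways, following the same philosophy used throughout the paper. I consider a $1 \times 2N$ board $F$ with the weight function $F_w(i) = zq^i$, and count tilings with exactly $t$ black tiles. By the $q$-binomial theorem (Theorem \ref{thm_bm}), the generating function of all such tilings equals
\begin{equation*}
F(t) = z^t q^{\frac{t(t+1)}{2}} \left[\substack{2N \\ \\ t}\right].
\end{equation*}

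Next, I will compute the same quantity by splitting $F$ into its left half $L$ (positions $1, \ldots, N$) and its right half $R$ (positions $N+1, \ldots, 2N$), and conditioning on how the $t$ black tiles are distributed. If $j$ black tiles land in $L$ and $i = t-j$ black tiles land in $R$, the left half contributes $z^j q^{j(j+1)/2} \left[\substack{N \\ \\ j}\right]$ directly by Theorem \ref{thm_bm}. For the right half, I reindex positions so that $R$ becomes a $1 \times N$ board whose weight at position $k$ is $(zq^N)q^k$, i.e., a copy of the standard board with $z$ replaced by $zq^N$. Applying Theorem \ref{thm_bm} again, its contribution with $i$ black tiles is
\begin{equation*}
(zq^N)^i \, q^{\frac{i(i+1)}{2}} \left[\substack{N \\ \\ i}\right] = z^i q^{Ni + \frac{i(i+1)}{2}} \left[\substack{N \\ \\ i}\right].
\end{equation*}

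Summing the product of the two half-contributions over all pairs $(i,j)$ with $i+j = t$, and equating with the single-board computation, yields
\begin{equation*}
z^t q^{\frac{t(t+1)}{2}} \left[\substack{2N \\ \\ t}\right] = \sum_{i+j=t} z^t \, q^{\frac{i(i+1)}{2} + Ni + \frac{j(j+1)}{2}} \left[\substack{N \\ \\ i}\right]\left[\substack{N \\ \\ j}\right].
\end{equation*}
The only remaining step is a routine simplification of the exponent of $q$: using $\frac{t(t+1)}{2} = \frac{i(i+1)}{2} + \frac{j(j+1)}{2} + ij$, the excess exponent on the right over the left is $Ni - ij = i(N-j)$, which matches the claimed identity. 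This is the step I expect to be the main (minor) obstacle — making sure the algebraic bookkeeping of the shift by $N$ cleanly produces the $q^{i(N-j)}$ factor, but it is purely computational and poses no real difficulty. The two equal forms in \eqref{eq_lem_qbinomial} are just the same sum written as a sum over pairs $(i,j)$ versus a sum over $i$ with $j=t-i$.
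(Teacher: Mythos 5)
Your proposal is correct and follows essentially the same route as the paper: both split the $1\times 2N$ board after position $N$, apply Theorem \ref{thm_bm} to each half (the right half being a copy of the standard board with $z$ replaced by $zq^{N}$), and match exponents via $\frac{t(t+1)}{2}=\frac{i(i+1)}{2}+\frac{j(j+1)}{2}+ij$. The only cosmetic differences are that you carry a general $z$ (the paper takes $z=1$) and you place the $i$ tiles on the right half, which directly yields the factor $q^{i(N-j)}$ rather than the equivalent $q^{j(N-i)}$.
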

\begin{proof}
	Consider a $ 1 \times 2N $ board $ F $ with the associated weight function $ F_w(i) = q^i$. Let $ T $ be a tiling of $ F $ with $ t $ black tiles. Imagine breaking the board $ F $ after $ N $ tiles to get two boards, say the left board $ L $ and the right board $ R $, each of size $ 1 \times N $. Then $ T $ gives a tiling of the left board with $ i $ black tiles and the right board with $ j $ black tiles such that $ i + j = t $. Conversely, each tiling of the left board with $ i $ black tiles and the right board with $ j $ black tiles, with $ i+j =t $, gives a tiling of the board $ F $ with $ t $ black tiles.
	Therefore, using \eqref{eq_Bm_recur_formula_final} we obtain
	\begin{align*}
	F(t) &= \sum_{i+j =t} \, L(i) R(j) \\
	\implies  q^{\frac{t(t+1)}{2}} \left[\substack{2N \\ \\  t } \right] &= \sum_{i+j =t} q^{\left(Nj +\frac{i(i+1) + j(j+1)}{2}  \right)} \left[\substack{N \\ \\  i } \right]  	\left[\substack{N \\ \\  j} \right] \\
	\implies  \left[\substack{2N \\ \\  t } \right] &= \sum_{i+j =t} q^{i(N-j) } \left[\substack{N \\ \\  i } \right]  	\left[\substack{N \\ \\  j} \right] 
	=  \sum_{i =0}^{t} q^{i(N-(t-i))} 	\left[\substack{N \\ \\  i } \right]  	\left[\substack{N \\ \\  t-i} \right].
	\end{align*}	
\end{proof}
\begin{rmk}
	The method in Lemma~\ref{lem_lem_qbinomial} can be used to easily get the following generalization.
	\begin{equation}\label{eq_lem_qbinomial_gener}
	\left[\substack{N_1 + N_2 + \cdots + N_k \\ \\  t } \right] = \sum_{i_1 +i_2 \cdots + i_k = t} q^{\left(\sum_{j=1}^{k-1} N_j i_{j+1} - \sum_{ \substack{ 1 \leq p < q \leq k} } i_p i_q\right)} 	\left[\substack{N_1 \\ \\  i_1 } \right]  	\left[\substack{N_2 \\ \\  i_2} \right] \cdots \left[\substack{N_3 \\ \\  i_k} \right].
	\end{equation}
\end{rmk}
Using the results proved above by the method of tiling, Jacobi triple product identity is now proved.
 \begin{thm} [Jacobi triple product identity] \label{thm_Jacobi}
	If $ z\neq 0 $ and $ |q| <1 $, then
	\begin{equation}  \label{eq_thm_Jacobi}
	\prod_{n=0}^{\infty} (1 + z q^{n+1}) (1 + z^{-1} q^{n}) (1-q^{n+1} )  = \sum_{n = -\infty}^{\infty} z ^n q^{\frac{n^2 +n}{2}} .
	\end{equation}
\end{thm}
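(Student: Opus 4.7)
The plan is to compute the generating function of tilings of the combined two-row board shown in Fig.~\ref{fig_wieght_boards_AB} in two different ways. The top board $F$ has weight function $F_w(i)=z^{-1}q^{i-1}$ and the bottom board $G$ has weight function $G_w(i)=zq^i$, and tiles on the two rows may be colored independently. By \eqref{eq_gen_fun_finite} applied row-by-row, the total generating function is
\[
(-zq;q)_N \, (-z^{-1};q)_N.
\]
On the other hand, splitting tilings by the number $m$ of black tiles on $F$ and the number $k$ of black tiles on $G$ and applying the $q$-binomial theorem (Theorem \ref{thm_bm}) to each row separately (noting that for $F$ the weights $z^{-1},z^{-1}q,\dots,z^{-1}q^{N-1}$ are obtained from $\tilde z q^i$ with $\tilde z = z^{-1}q^{-1}$, which produces a contribution $z^{-m}q^{m(m-1)/2}\left[\substack{N\\ \\ m}\right]$ from $m$ black tiles), I get
\[
(-zq;q)_N \, (-z^{-1};q)_N \;=\; \sum_{m,k=0}^{N} z^{k-m}\, q^{\frac{k(k+1)}{2}+\frac{m(m-1)}{2}}\, \left[\substack{N\\ \\ m}\right]\left[\substack{N\\ \\ k}\right].
\]

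Next I would set $n=k-m$ and sum over $m$ for each fixed $n\in\{-N,\dots,N\}$. A short expansion of the exponent shows $\frac{k(k+1)}{2}+\frac{m(m-1)}{2}=\frac{n(n+1)}{2}+nm+m^2$ when $k=n+m$, so the coefficient of $z^n$ is
\[
q^{\frac{n(n+1)}{2}}\sum_{m}q^{nm+m^2}\left[\substack{N\\ \\ m}\right]\left[\substack{N\\ \\ n+m}\right].
\]
The crucial step, and the one I expect to be the main technical obstacle, is recognizing this inner sum as a single Gaussian coefficient via Lemma~\ref{lem_lem_qbinomial}. Taking $t=N+n$ in \eqref{eq_lem_qbinomial} and reindexing by $i=n+m$, $j=N-m$, then using the symmetry $\left[\substack{N\\ \\ N-m}\right]=\left[\substack{N\\ \\ m}\right]$, converts the exponent $i(N-j)$ into $nm+m^2$ and yields
\[
\left[\substack{2N\\ \\ N+n}\right]\;=\;\sum_{m} q^{nm+m^2}\left[\substack{N\\ \\ m}\right]\left[\substack{N\\ \\ n+m}\right].
\]
Combining the two evaluations gives the finite Jacobi identity
\[
(-zq;q)_N\,(-z^{-1};q)_N \;=\; \sum_{n=-N}^{N} z^n\, q^{\frac{n(n+1)}{2}}\,\left[\substack{2N\\ \\ N+n}\right].
\]

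Finally, following Cauchy's idea I would pass to the limit $N\to\infty$. For each fixed $n$, since $\left[\substack{2N\\ \\ N+n}\right]=\frac{(q)_{2N}}{(q)_{N+n}(q)_{N-n}}\to \frac{1}{(q)_\infty}$, and the summands are dominated uniformly in $N$ by an absolutely summable series in $n$ (using $|q|<1$ and $z\neq 0$), Tannery's theorem justifies interchanging the limit and sum. This gives
\[
(-zq;q)_\infty\,(-z^{-1};q)_\infty \;=\; \frac{1}{(q;q)_\infty}\sum_{n=-\infty}^{\infty} z^n\, q^{\frac{n(n+1)}{2}}.
\]
Multiplying both sides by $(q;q)_\infty=\prod_{n=0}^{\infty}(1-q^{n+1})$ and rewriting the two infinite products as $\prod_{n=0}^{\infty}(1+zq^{n+1})$ and $\prod_{n=0}^{\infty}(1+z^{-1}q^{n})$ produces \eqref{eq_thm_Jacobi}.
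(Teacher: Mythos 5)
Your proposal is correct and coincides with the paper's own ``alternate proof'' of Theorem~\ref{thm_Jacobi}: the same two-row board $F\star G$, the same two evaluations of its generating function, the same use of Lemma~\ref{lem_lem_qbinomial} to collapse the inner sum to $\left[\substack{2N \\ \\ N+n}\right]$, and the same Tannery limit. (The paper also records a shorter first proof that obtains the finite identity \eqref{eq_both_terms} directly by substituting $N\mapsto 2N$, $z\mapsto zq^{-N}$ into the $q$-binomial theorem, but your route is the one the paper presents as the genuinely ``tiling'' argument.)
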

\begin{proof}
	We will first prove the following finite version of the equality, after which the Jacobi triple product identity will follow by letting $ N $ approach to $ \infty $ in the limit.
	\begin{equation}\label{eq_both_terms}
	\prod_{i=1}^{N} (1 + z q^i ) (1 + z^{-1} q^{i-1}) = \sum_{m=-N}^{N} z^{m} q^{\frac{m^2+m}{2}} \left[\substack{2N \\ \\ N + m } \right]. 
	\end{equation}
	This idea was used by Cauchy to prove the Jacobi triple product identity. The result in \eqref{eq_both_terms} follows from q-binomial theorem (Theorem \ref{thm_bm}, which was proved using the tiling method) by replacing $ N  $ and $ z $ by $ 2N $ and $ z q^{-N} $ respectively. 
	We note that $$ \lim\limits_{ N \to \infty} \left[\substack{2N \\ \\ N + m }\right] =  \lim\limits_{ N \to \infty} \, \frac{(q)_{2N}}{(q)_{N+m} (q)_{N-m}} = \frac{1}{(q)_{\infty}}. $$ Therefore, as $ N \to \infty  $ in \eqref{eq_both_terms},  using Tannery’s theorem (Sect.~3.7.1, \cite{loya2017amazing}) to change the order of limit and the summation, we obtain 
	\begin{align}\label{eq_thm_final}
&	\lim\limits_{ N \to \infty} \prod_{i=1}^{N} (1 + z q^i ) (1 + z^{-1} q^{i-1}) = \sum_{m=- \infty}^{\infty}  \frac{z^{m} q^{\frac{m^2+m}{2}}}{(q)_{\infty}} \\ & \implies   (q)_{\infty} \prod_{n=0}^{\infty} (1 + z q^{n+1}) (1 + z^{-1} q^{n})  = \sum_{m=-\infty}^{\infty}  \, z^{m} q^{\frac{m^2+m}{2}}. \nonumber
	\end{align}
	This completes the proof of the theorem. \\
	
	\noindent \textbf{Alternate proof:} We now give another tiling proof of \eqref{eq_both_terms}.
	Consider two $ 1 \times N $ boards $ F $ and $ G $.  Let the weight function of $ F $ be $ F_w(i) = z^{-1} q^{i-1}  $ and the weight function of $ G $ be
	$ G_w(i) = z q^i $. Let $ F $ be placed on the top of $ G $, giving a $ 2 \times N $ board as shown in Fig.\!~\ref{fig_wieght_boards_AB}.
	We consider all possible tilings of both $ F $ and $ G $, and denote the combined board as $ F\star G $. Clearly the associated generating function is
	\begin{equation}\label{eq_gen_fun}
	H(F\star G, z,q) = \prod_{i=1}^{N} (1 + z q^i ) (1 + z^{-1} q^{i-1}) = \sum_{ \substack{\text{tilings} \ T \ \text{of F} \\ \text{tilings}	\ S \ \text{of G} }} W(T)W(S).
	\end{equation}
	If we start with $ n $ black tiles, we can use $ i $ of it to place on board $ F $ and $ j $ of it to place on board $ G $ with $ i +j = n $. Let us consider all the tilings such that $ |i - j| = m  $, i.e., all the tilings such that the absolute difference between the black tile on $ F $ and $ G $ is exactly $ m $ black tiles.  We have,
	\begin{align} \label{eq_gen_fun_two}
	H(F\star G, z,q )& = \sum_{m=0}^{N} \sum_{i=0}^{N-m} F(i+m) G(i) +  \sum_{m=1}^{N}  \sum_{i=0}^{N-m} F(i) G(i+m),
	\end{align}
	where $ F(m) $, $ G(m) $, denote the generating functions for all the tilings of with $ m $ black tiles of $ F $, $ G $ respectively  (c.f., Definition \ref{def_bmk_bm}, \eqref{eq_b_m}).
	Using  \eqref{eq_A_m}, the first term of the sum on the right is computed as follows.
	\begin{align}\label{eq_first_term}
	\sum_{m=0}^{N} \sum_{i=0}^{N-m} F(i+m) G(i) & = \sum_{m=0}^{N} \sum_{i=0}^{N-m}\, z^{-(i+m)} q^{\frac{(i+m)(i+m-1)}{2}} \left[\substack{N \\ \\ i+m } \right]  z^{i} q^{\frac{i^2 +i}{2}}   \left[\substack{N \\ \\  i } \right] \nonumber \\
	& = \sum_{m=0}^{N} z^{-m} q^{\frac{m^2-m}{2}}\sum_{i=0}^{N-m}\,  q^{i(i+m)} \left[\substack{N \\ \\ (N -m) -i } \right] \left[\substack{N \\ \\  i } \right] \nonumber \\
	& = \sum_{m=0}^{N} z^{-m} q^{\frac{m^2-m}{2}} \left[\substack{2N \\ \\ N - m } \nonumber \right] \qquad \text{(using Lemma \ref{lem_lem_qbinomial})}\\
	& = \sum_{m=-N}^{0} z^{m} q^{\frac{m^2+m}{2}} \left[\substack{2N \\ \\ N + m } \right].
	\end{align}
	A similar computation for the second term of the sum yields
	\begin{equation}\label{eq_second_term}
	\sum_{m=1}^{N}  \sum_{i=0}^{N-m} F(i) G(i+m) = \sum_{m=1}^{N} z^{m} q^{\frac{m^2+m}{2}} \left[\substack{2N \\ \\ N + m } \right]. 
	\end{equation}
	From \eqref{eq_gen_fun_two}, \eqref{eq_first_term} and \eqref{eq_second_term} we get \eqref{eq_both_terms}. Then, as before, when $ N$  approaches $\infty  $,  Jacobi triple product identity follows from \eqref{eq_both_terms}.
\end{proof}

\section{Rogers-Ramanujan Identities}\label{Sec_Ramanujan}
In this section we will prove the celebrated Rogers-Ramanujan identities, given below, by the tiling method.  
\begin{align}\label{eq_Roger_Ramanujan}
\sum_{m=0}^{\infty} \, \frac{q^{m^2}}{(q)_m} &= \prod_{m=0}^{\infty} \, \frac{1}{(1-q^{5m+1})(1-q^{5m+4})} \\
\sum_{m=0}^{\infty} \, \frac{q^{m^2+m}}{(q)_m} &= \prod_{m=0}^{\infty} \, \frac{1}{(1-q^{5m+2})(1-q^{5m+3})}. 
\end{align}

The idea of the proof is very similar to the Cauchy's idea used in proving Jacobi triple product identity. We will closely follow the proof given in \cite{bressoud1983easy}, which relies on a key lemma, namely Lemma 1 in \cite{bressoud1983easy}. We will define the concept of \emph{level of a tiling}, which will turn out to be the key step in the proof of Theorem \ref{thm_rogers_ramanujan_main}. Then Lemma 1  in \cite{bressoud1983easy}, will be a simple corollary of Theorem \ref{thm_rogers_ramanujan_main}.
\begin{rmk}
	The concept of level of a tiling is new and it may be useful in other contexts.
\end{rmk}

\begin{figure}[ht]
	\begin{center}
		\begin{tikzpicture} [scale =1]	
			\foreach \Z in {1,2,3,4}
			{
				\draw[thick] (\Z,0) -- (\Z + 1,0) -- (\Z + 1,1) -- (\Z,1) -- cycle;
				\node at (\Z + 0.5,-0.4) {$ \Z$};
			}
			\foreach \Z in {2,3,4}{
				\pgfmathtruncatemacro{\i}{\Z-1};
				\node at (\Z + 0.5,0.5) {$ z_1^{\i}  $};
			}
			
			\foreach \x in {5,6,7}
			\draw[thick] (\x,0) -- (\x + 1,0) -- (\x + 1,1) -- (\x,1) -- cycle;
			
			\foreach \x in {8,9}
			\draw[thick] (\x,0) -- (\x + 1,0) -- (\x + 1,1) -- (\x,1);
			
			\node at (5 + 0.5,0.5) {$ \cdots$};
			\node at (6 + 0.5,0.5) {$ z_1^i $};
			\node at (7 + 0.5,0.5) {$ \cdots$};
			\node at (6 + 0.5,-0.4) {$ i+1$};
			\node at (8 + 0.5,0.5) {$ \cdots $};
			\node at (9 + 0.5,0.5) {$ \cdots $};

			\foreach \Z in {1,2,3,4}
			{   \pgfmathtruncatemacro{\i}{\Z-1};
				\draw[thick] (\Z,1) -- (\Z + 1,1) -- (\Z + 1,2) -- (\Z,2) -- cycle;
			}
			
			\foreach \Z in {2,3,4}{
				\pgfmathtruncatemacro{\i}{\Z-1};
				\pgfmathtruncatemacro{\j}{\i};
				\node at (\Z + 0.5,1.5) {$ z_{2}^{\i}$};
			}
			\foreach \x in {4,5,6,7}
			\draw[thick] (\x,1) -- (\x + 1,1) -- (\x + 1,2) -- (\x,2) -- cycle;
			
			\foreach \x in {8,9}
			\draw[thick] (\x,1) -- (\x + 1,1) -- (\x + 1,2) -- (\x,2);
			
			\node at (5 + 0.5,1.5) {$ \cdots$};
			\node at (6 + 0.5,1.5) {$ z_2^{i} $};
			\node at (7 + 0.5,1.5) {$ \cdots$};
			\node at (8 + 0.5,1.5) {$ {\cdots}$};
			\node at (9 + 0.5,1.5) {$\cdots $};
			\node at (9 + 0.5,-0.4) {$ \infty$};

			\foreach \Z in {1,2,3,4}
			{   \pgfmathtruncatemacro{\i}{\Z-1};
				\draw[thick] (\Z,2) -- (\Z + 1,2) -- (\Z + 1,3) -- (\Z,3) -- cycle;
			}
			
			\foreach \Z in {2,3,4}{
				\pgfmathtruncatemacro{\i}{\Z-1};
				\pgfmathtruncatemacro{\j}{3*\i};
				\node at (\Z + 0.5,2.5) {$ z_{3}^{\i}$};
			}
			\foreach \x in {4,5,6,7}
			\draw[thick] (\x,2) -- (\x + 1,2) -- (\x + 1,3) -- (\x,3) -- cycle;
			
			\foreach \x in {8,9}
			\draw[thick] (\x,2) -- (\x + 1,2) -- (\x + 1,3) -- (\x,3);
			
			\node at (5 + 0.5,2.5) {$ \cdots$};
			\node at (6 + 0.5,2.5) {$ z_3^{i} $};
			\node at (7 + 0.5,2.5) {$ \cdots$};
			\node at (8 + 0.5,2.5) {$ {\cdots}$};
			\node at (9 + 0.5,2.5) {$\cdots $};
			
			\node at (0.4,0.4) {$ 1$};
			\node at (0.4,1.4) {$ 2$};
			\node at (0.4,2.4) {$ 3$};
			\foreach \y in {0,1,2}{
				\node at (1.4,\y+0.5) {$1$};}
			
			
		\end{tikzpicture}
	\end{center}
	\caption{A $ 3 \times \infty $ board with its weight function $ B_w(i+1,j) = z_j^i $ for $ i \in \NN $ and  $ j \in \{1,2,3\}$.}\label{fig_wieght_boards_three_infinity_thm}
\label{fig_wieght_boards_three_infinity}
\end{figure}

\begin{figure}
	\begin{center}
		\includegraphics[scale=1.2]{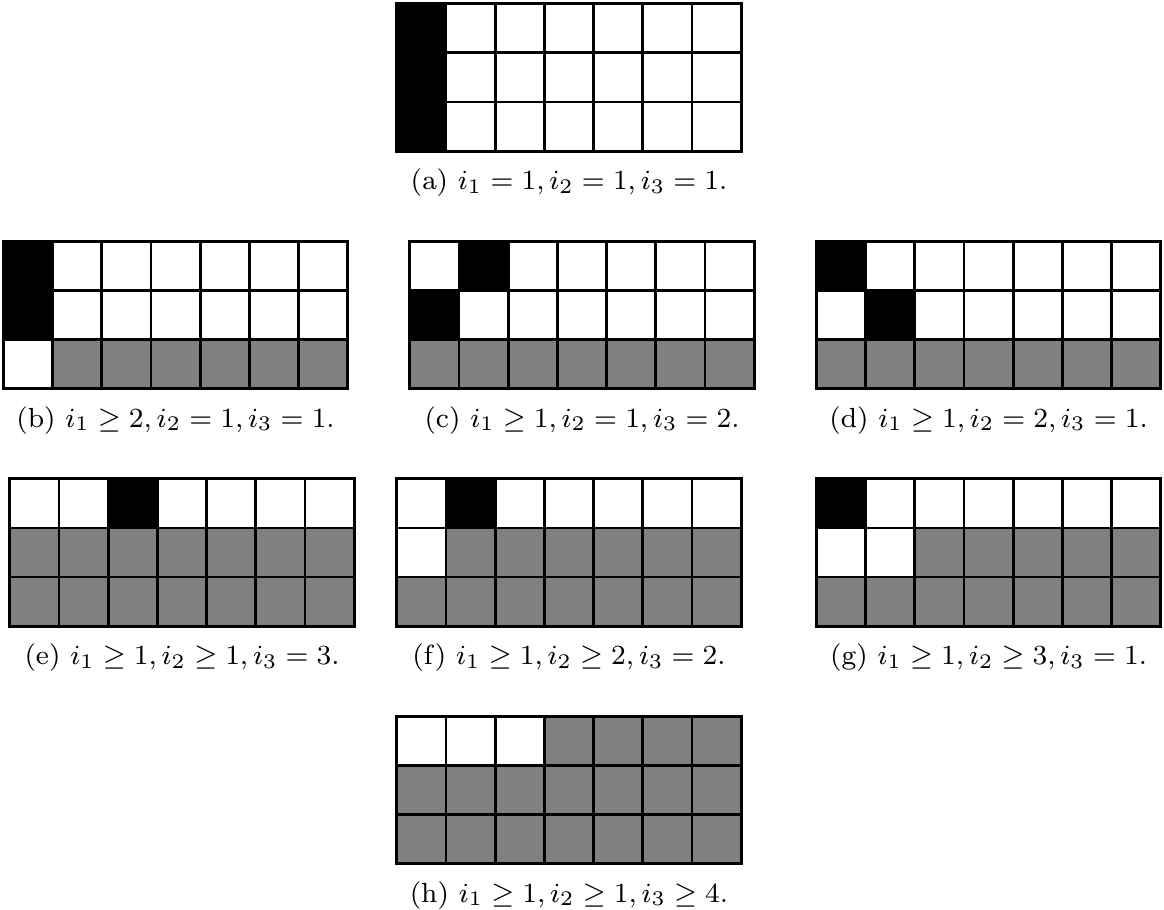}
	\end{center}
	\caption{Levels for a $ 3 \times \infty  $ board is illustrated, with the level of the tiling increasing from $ 0 $ in (a) to $ 3 $ in (h).  The level is $ 1 $ in (b), (c) and (d), and the level is $ 2 $ in (e), (f) and (g). Note that $ [i_1,i_2,i_3] $ denotes the position of the black tiles (i.e., the position of the black tile in each row). The gray tiles show a range of possible positions of a black tile in a row.}    \label{fig_example_level}
\end{figure}

We will consider a board $ B $ of the size $ N \times \infty $. The weight at the $ (i+1,j) $-th position of $ B $ is $z_j^i $. In Fig.\!~\ref{fig_wieght_boards_three_infinity}, for $ N=3 $, such a board and its weight distribution is shown. 

In this section \emph{a tiling of the board $ B $} will mean placing $ N $ black tiles on the board such that \emph{each row contains exactly one black tile}. We also assume that the rows are counted from the bottom.
\begin{defn}
	
	For a tiling $ T $ of $ B $, its local weight $ W_T(i,j) $ at the position $ (i,j) $ is defined as 
	\begin{equation}\label{eq_WT_i_j}
	W_T(i,j) =  \begin{cases} B_w(i,j)  & \text{if the tile at  $ (i,j) $-th poition is black, i.e., if $T(i,j) = black$,} \\
	1 & \text{if the tile at $ (i,j) $-th position is white, i.e., if $T(i,j) = white$.} \end{cases}
	\end{equation}
	The total weight of a tiling $T$ is defined as 
	\begin{equation}\label{eq_weight_i_j}
	W(T) = \prod_{i=1}^{\infty} \prod_{j=1}^{N} W_T(i,j),
	\end{equation}
	the product of its local weights at all the positions on the board. 
\end{defn}
Since, the weight of a tiling $ T $ depends upon the position of the black tiles in each row, we will find it convenient to say that  \emph{$ T $ is a tiling with black tiles at the position $ [i_1,i_2, \ldots, i_N] $} if in the $ k $-th row there is a black tile at the position $ i_k $, i.e., there is a black tile at the  $ (i_k,k) $-th position for each $ 1 \leq k \leq N$.

Now we define the \textit{level of a tiling}, which is a non-negative integer associated to each tiling $ T $ of the board $ B $.
\begin{defn} [Level of a tiling]\label{def_level}
	Let $ T $ be a tiling with black tiles at the position $ [i_1,i_2, \ldots, i_N] $. If $ i_1=i_2= \cdots =i_N =1 $ then the level of $ T $ is defined to be $ 0 $.
	Otherwise, we say that $ T $ is of level $ j $, with $ 1 \leq j \leq N$, if the following conditions are satisfied
	\begin{enumerate}
		\item A positive integer $ s $ exists such that $ 1 \leq s \leq i_j $ and $ s + \sum_{n=j+1}^{N} i_{n} = N+1 $, 
		\item $ 1 \leq i_k  \leq j+1 \quad \text{ for } j <  k \leq N $.
		\end{enumerate}
\end{defn}

For a tiling $ T  $ of level $ j $, we note that there are no restrictions on row $ 1 $ to $ j-1 $, i.e., $ \ds 1 \leq i_k  \leq \infty, \text{ for } 1 \leq k < j$, and in each of these rows one black tile can be placed at any arbitrary position. However, the above definition of level $ j $, imposes a restriction  on the placement of tiles from $ j $-th row to $ N $-th row. For $ N=3 $, this is illustrated in Fig.\!~\ref{fig_example_level}, with the assumption that the board in each sub-figure, and also the gray tiles, extend to infinity on the right. The gray tiles show a range of possible positions of a black tile in a row. Of course, for any particular tiling a black tile can occupy only a unique position in each row. 

It is a consequence of the definition of the level in Definition \ref{def_level}, that the following holds for a tiling $ T $ of level $ j $,
\begin{equation*}
N-j \leq \sum_{k=j+1}^{N} i_{k} \leq N. 
\end{equation*}

It is easy to see that each tiling $ T $ will have a unique level $ j $ with $ 0 \leq j  \leq N $. Suppose $ T $ has two distinct levels $ p $ and $ q $ with $ p < q $.  Let $ t = \sum_{k=p+1}^{q-1} i_k  $, $ t_1 = \sum_{k=q+1}^{N} i_k $. Then there must exist positive integers $ s \geq 1$ and $ s_1 \geq 1 $ such that, $ s + t + i_q + t_1 = N+1 $, and $ s_1 + t_1 = N+1 $, and $ i_q \geq  s_1  $. But it means, $ i_q + t_1 \geq N+1 $, and $ s + t \leq 0 $, a contradiction since $ s \geq 1 $ and $ t \geq 0 $. It proves that any tiling $ T $ can not have two different levels.

Next, we show that each tilling has a level. Let $ T $ be a tiling with black tiles at the position $ [i_1,i_2, \ldots, i_N] $. First, we can assume that there exists a minimum $ k \geq 0  $, with the property that 
\begin{equation}\label{eq_property_min}
i_{n} \leq k +1, \qquad \text{for } k+1 \leq n  \leq N.
\end{equation}
If this is not the case then on taking $ k=N-1 $, it would mean $ i_N \geq N+1 $ and $ T $ will be of level $ N $, and we will be done. Also, it is easy to check that if $ k=0 $, then $ T $ is of level $ 0 $. Therefore, we can assume that a minimum $ k > 0 $ exists, such that it  has the property given in \eqref{eq_property_min}. Consider the equation $ s_k + \sum_{n=k+1}^N \, i_n = N+1 $. If $ \sum_{n=k+1}^N \, i_n \geq  N+1  $, then there exists $ r $ such that $ k + 1 \leq r \leq N$ and $ s_{r} + \sum_{n=r+1}^N \, i_n =N+1 $, with $ 1 \leq s_{r} \leq i_{r}  $, and we are done as then $ T $ is of level $ r $. So, we can assume $ N-k \leq  \sum_{n=k+1}^N \, i_n \leq N $. If $ \sum_{n=k+1}^N \, i_n = N $, then  $ s_k + \sum_{n=k+1}^N \, i_n = N+1   $ has a solution $ s_k $, such that $ 1 = s_k \leq i_k $, and $ T $ is of level $ k $. Therefore,  we can assume $  N-k \leq  \sum_{n=k+1}^N \, i_n \leq N -1 $.   This means,
\begin{equation}\label{eq_limit_range_i_k}
1 \leq i_n \leq k, \qquad \text{for } k+1 \leq n  \leq N.
\end{equation}
Since, $  N-k \leq  \sum_{n=k+1}^N \, i_n \leq N -1 $, we note that $ s_k + \sum_{n=k+1}^N \, i_n = N+1   $ has a solution $ s_k $ such that  $ 2 \leq s_k \leq  k+1$. If $ i_k \geq s_k  $, then $ T $ is of level $ k $ and we are again done. Therefore, assume $ i_k < s_k \leq k+1 $, i.e., $ i_k \leq k $. However, if this is the case then, in view of \eqref{eq_limit_range_i_k}, we get a contradiction to the minimality of $ k $, as on replacing $ k $ by $ k-1 $, the property given in \eqref{eq_property_min} is satisfied. Hence, we are done again. We have shown that every tiling $ T $ must have a level. 

Next, we prove the following result using the concept of the level of a tiling.
\begin{thm}	\label{thm_rogers_ramanujan_main}
	Let $ z_j \in \CC $ with  $ |z_j| < 1  $ for $ 1 \leq j \leq N $. Then,
	\begin{equation}\label{eq_rogers_ramanujan_main_thm}
	\prod_{j=1}^{N} \frac{1}{1 - z_j} = 1 + \sum_{j=1}^{N} \, \,  \frac{h_j(z_j, z_{j+1}, \cdots, z_N) }{(1-z_1) (1-z_2) \ldots (1-z_j)},
\end{equation}
where, 
\begin{equation}\label{eq_homogeneous_poly}
h_j(z_j, z_{j+1}, \cdots, z_N)	:= \sum_{ \substack{
			0 \leq \,  \beta_j, \beta_{j+1}, \cdots, \beta_N \leq \,  j \\  \sum_{k=j}^{N} \beta_k = j
	}}   \quad  \,  {\prod_{k=j}^{N} z_k^{\beta_k} },
\end{equation}
is the complete homogeneous symmetric polynomial of degree $ j $ in  variables $ z_j,z_{j+1},\cdots, z_N $.
\end{thm}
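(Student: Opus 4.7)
The plan is to compute the generating function of the set of all tilings of the $N \times \infty$ board $B$ in two different ways, exploiting the fact (established before the theorem statement) that every tiling has a unique level $j \in \{0,1,\dots,N\}$. On the one hand, since each row is tiled independently by placing a single black tile at some position $i \geq 1$ (contributing weight $z_k^{i-1}$ in row $k$), the geometric sum in each row gives
\begin{equation*}
\sum_{\text{tilings } T \text{ of } B} W(T) \;=\; \prod_{k=1}^{N} \sum_{i=1}^{\infty} z_k^{i-1} \;=\; \prod_{k=1}^{N} \frac{1}{1-z_k},
\end{equation*}
which is the left-hand side of \eqref{eq_rogers_ramanujan_main_thm}. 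On the other hand, I will partition the tilings according to their level and compute the contribution of each level separately; the sum of these contributions will match the right-hand side term-by-term, with the $j=0$ case producing the leading $1$.

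For level $j=0$, the unique tiling is $[1,1,\dots,1]$, contributing weight $1$. For a fixed level $j$ with $1 \leq j \leq N$, the definition of level imposes no restrictions on rows $1$ through $j-1$ (each such row contributes $\tfrac{1}{1-z_k}$ as above), so these rows jointly contribute $\prod_{k=1}^{j-1}\tfrac{1}{1-z_k}$. The remaining rows $j, j+1, \dots, N$ satisfy $1 \leq i_k \leq j+1$ for $k>j$ and $i_j \geq N+1 - \sigma$, where $\sigma := \sum_{k=j+1}^{N} i_k$, which forces $\sigma \leq N$. Summing the geometric tail over $i_j$ yields
\begin{equation*}
\sum_{i_j \geq N+1-\sigma} z_j^{i_j - 1} \;=\; \frac{z_j^{N-\sigma}}{1-z_j},
\end{equation*}
so the joint contribution of rows $j,\dots,N$ is
\begin{equation*}
\frac{1}{1-z_j} \sum_{\substack{1 \leq i_k \leq j+1 \\ j < k \leq N,\ \sigma \leq N}} z_j^{N-\sigma}\,\prod_{k=j+1}^{N} z_k^{i_k - 1}.
\end{equation*}

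The key manipulation is the substitution $\alpha_k := i_k - 1$ for $k > j$ and $\alpha_j := j - \sum_{k=j+1}^{N} \alpha_k$. Since $i_k \in \{1,\dots,j+1\}$ gives $\alpha_k \in \{0,\dots,j\}$ for $k>j$, and $\sigma \leq N$ translates exactly into $\sum_{k=j+1}^{N}\alpha_k \leq j$, the auxiliary variable $\alpha_j$ lies in $\{0,1,\dots,j\}$ and $\sum_{k=j}^{N}\alpha_k = j$. The exponent of $z_j$ becomes $N-\sigma = j - \sum_{k=j+1}^{N}\alpha_k = \alpha_j$, and the whole sum becomes
\begin{equation*}
\sum_{\substack{0 \leq \alpha_k \leq j \\ \sum_{k=j}^{N} \alpha_k = j}} \prod_{k=j}^{N} z_k^{\alpha_k} \;=\; h_j(z_j,z_{j+1},\dots,z_N),
\end{equation*}
by the definition \eqref{eq_homogeneous_poly}. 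Multiplying by the unrestricted rows and the factor $\tfrac{1}{1-z_j}$ from the tail sum yields exactly $\tfrac{h_j(z_j,\dots,z_N)}{(1-z_1)\cdots(1-z_j)}$. Summing over $j=0,1,\dots,N$ gives the right-hand side of \eqref{eq_rogers_ramanujan_main_thm} and completes the proof.

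The main obstacle (already addressed in the lead-up to the theorem) is verifying that every tiling belongs to exactly one level, so that no double-counting or omission occurs when partitioning the generating function. The remaining work is the bookkeeping change of variables above, which is elementary once the level structure is in hand. Convergence of the geometric series is guaranteed by the hypothesis $|z_j| < 1$, so no analytic subtleties arise.
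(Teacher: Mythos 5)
Your proposal is correct and follows essentially the same route as the paper's own proof: compute the generating function of the $N\times\infty$ board once as a product of row-wise geometric series and once by partitioning tilings according to their level, then convert the level-$j$ constraints into the index set of $h_j$ by the shift $i_k\mapsto i_k-1$ (your $\alpha_j=N-\sigma$ is exactly the paper's $s-1=\beta_j$). The only cosmetic difference is that you absorb the level-$N$ case into the general computation rather than treating it separately, which works since $h_N(z_N)=z_N^N$.
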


\begin{proof}
		Let $ B $ be an $ N \times \infty $ board and the weight at its $ (i+1,j) $-th position is $ z_j^{i} $ (see Fig.~\ref{fig_wieght_boards_three_infinity_thm} for an illustration for $ N =3$). 
	On expanding each term in the product on the left side, as in \eqref{eq_prod_series}, it becomes immediately clear that the generating function of all the tilings of the board $ B $ equals the left hand side of \eqref{eq_rogers_ramanujan_main_thm},
	
	\begin{equation}\label{eq_prod_series}
		\prod_{j=1}^{N} \frac{1}{(1 - z_j)}  = 	\prod_{j=1}^{N} \sum_{n=0}^{\infty} \, z_j^n .
	\end{equation}

	In order to find the generating function of all the tiling of the board $ B $, we compute the generating functions of all the tilings of level $ j $, from $ j =0$ to $ j=N $.  By adding these generating functions, we will find the generating function of all the tiling of board $ B $, which will prove the Theorem. 
	For level $ j=0 $ the associated generating function is $ 1 $, and for level $ j=N $ the generating function is $$ \frac{z^N }{\prod_{k=1}^{N} \, (1 -z_k)} .$$
	Therefore, we assume $ 1 \leq j <N $.
	Rows $ 1 $ to $ j-1 $ contribute the following to the generating function
	\begin{equation}\label{eq_contrib_one}
		\prod_{k=1}^{j-1} \sum_{ r =0 }^{\infty} z_k^r  = \prod_{k=1}^{j-1} \, \frac{1}{1 - z_k}.
	\end{equation}
	Let \[
	D_j: = \left\{  i_j, i_{j+1}, \cdots, i_N: \substack{ N-j \, \leq   \sum_{k=j+1}^{N} i_k \, \leq N\\ 
		1 \leq \,  s,i_{j+1}, \cdots, i_N \leq \,  j+1 \\ s + \sum_{k=j+1}^{N} i_k = N+1
	} \right\},
	\]
	denote the set corresponding to all possible values of the positions of black tiles in the rows $  j, {j+1}, \cdots, N $, in all the tilings of level $ j $.
	
	The contributions of rows $ j $ to $ N $ is computed as follow.
	\begin{align}\label{eq_contrib_j_n}
		 \sum_{D_j}  \sum_{ i_{j} = s}^{\infty} (z_j)^{i_j-1}  \quad \prod_{k=j+1}^{N} (z_k)^{i_k-1} 
		&= \sum_{D_j}   \frac{(z_j)^{s-1} }{1-z_j}  \quad \prod_{k=j+1}^{N} (z_k)^{i_k-1} \nonumber \\
		&=  \frac{1 }{1-z_j} \, \cdot \sum_{ \substack{
				0 \leq \,  \beta_j, \beta_{j+1}, \cdots, \beta_N \leq \,  j \\  \sum_{k=j}^{N} \beta_k = j
		}}   \quad  \,  \prod_{k=j}^{N} (z_k)^{\beta_k}.
	\end{align}
	Here in the last step, we have carried out change of variables by replacing $ s$, $i_{j+1}$, $i_{j+2}, \cdots, i_N $ by $ \beta_j +1$, $ \beta_{j+1}+1 $, $ \beta_{j+2}+1 $, $\cdots, \beta_N+1  $, respectively, in the summation. Therefore the contribution to the generating function by all the tilings of level $ j $ is given by 
	\[ \sum_{ \substack{
			0 \leq \,  \beta_j, \beta_{j+1}, \cdots, \beta_N \leq \,  j \\  \sum_{k=j}^{N} \beta_k = j
	}}   \quad  \,  \frac{\prod_{k=j}^{N} z_k^{\beta_k} }{(1-z_1) (1-z_2) \ldots (1-z_j)} =  \frac{h_j(z_j, z_{j+1}, \cdots, z_N) }{(1-z_1) (1-z_2) \ldots (1-z_j)}.  \]
	This completes the proof of the theorem.
	
\end{proof}

\begin{rmk}
	For $ N=2 $ and $ 3 $, Theorem  \ref{thm_rogers_ramanujan_main} gives the following identities
	\begin{align*}
		\frac{1}{(1-z_1)(1-z_2)} &= 1 + \frac{z_1+z_2}{1-z_1} + \frac{z_2^2}{(1-z_1)(1-z_2)},\\
		\frac{1}{(1-z_1)(1-z_2)(1-z_3)} &= 1 + \frac{z_1+z_2+z_3}{1-z_1} + \frac{ z_2^2 + z_2 z_3 + z_3^2}{(1-z_1)(1-z_2)} + \frac{z_3^3}{(1-z_1)(1-z_2)(1-z_3)}.
	\end{align*}
\end{rmk}

As an immediate corollary of Theorem \ref{thm_rogers_ramanujan_main}, we obtain the following result (which is Lemma 1 in \cite{bressoud1983easy}).

\begin{cor}\label{cor_rogers_ramanujan_main}
	\begin{equation}\label{eq_rogers_ramanujan_main}
		\prod_{j=1}^{N} \frac{1}{(1 - zq^j)} = 1 + \sum_{j=1}^{N} \frac{z^j q^{j^2}}{(1-zq)\cdots (1-zq^{j})} \, \left[\substack{N \\ \\  j} \right].
	\end{equation}
\end{cor}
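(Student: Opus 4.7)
The proof plan is a direct substitution into Theorem~\ref{thm_rogers_ramanujan_main}. Specifically, I would set $z_k := zq^k$ for $1 \leq k \leq N$ in the statement of the theorem. The hypothesis $|z_k| < 1$ holds provided $|zq^k| < 1$; since we are working formally in $q$-series (or, analytically, for $|z|$ small enough and $|q| < 1$), this is not an obstruction. The denominator on the right side of \eqref{eq_rogers_ramanujan_main_thm} immediately becomes $(1-zq)(1-zq^2)\cdots(1-zq^j)$, matching the denominator in \eqref{eq_rogers_ramanujan_main}.

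The one substantive computation is to evaluate the complete homogeneous symmetric polynomial $h_j(zq^j, zq^{j+1}, \ldots, zq^N)$ under this specialization. Pulling out $z^j$ (which is possible because $\sum_{k=j}^{N} \beta_k = j$) and reindexing via $\alpha_{k-j} := \beta_k$ for $k = j, j+1, \ldots, N$, one obtains
\begin{equation*}
h_j(zq^j, zq^{j+1}, \ldots, zq^N) = z^j \sum_{\substack{\alpha_0 + \alpha_1 + \cdots + \alpha_{N-j} = j \\ 0 \leq \alpha_i \leq j}} q^{\sum_{k=0}^{N-j}(k+j)\alpha_k} = z^j q^{j^2} \sum_{\substack{\alpha_0 + \cdots + \alpha_{N-j} = j}} q^{\alpha_1 + 2\alpha_2 + \cdots + (N-j)\alpha_{N-j}},
\end{equation*}
where in the last step I used $\sum_k (k+j)\alpha_k = j\sum_k \alpha_k + \sum_k k\alpha_k = j^2 + \sum_k k\alpha_k$ and the $k=0$ contribution to the exponent vanishes.

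The remaining sum is precisely the partition-theoretic expression for $\left[\substack{N \\ \\ j}\right]$ established in Corollary~\ref{Cor_partition_interpretation} (specifically equation~\eqref{Eq_cor_fm} with $m$ replaced by $j$). Hence $h_j(zq^j, \ldots, zq^N) = z^j q^{j^2} \left[\substack{N \\ \\ j}\right]$. Substituting back into \eqref{eq_rogers_ramanujan_main_thm} yields \eqref{eq_rogers_ramanujan_main} term by term, completing the proof.

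There is essentially no obstacle here: once Theorem~\ref{thm_rogers_ramanujan_main} and Corollary~\ref{Cor_partition_interpretation} are in hand, the corollary reduces to a bookkeeping exercise about how the complete homogeneous symmetric polynomial in a geometric progression of variables encodes the Gaussian polynomial. The only mild care needed is to verify that the index shift $\beta_k \mapsto \alpha_{k-j}$ correctly matches the summation range appearing in Corollary~\ref{Cor_partition_interpretation}, which it does because both sums run over weak compositions of $j$ into $N-j+1$ parts.
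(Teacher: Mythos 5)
Your proposal is correct and follows essentially the same route as the paper's own proof: substitute $z_k = zq^k$ into Theorem~\ref{thm_rogers_ramanujan_main}, factor $z^j q^{j^2}$ out of the specialized complete homogeneous symmetric polynomial, and identify the remaining sum with $\left[\substack{N \\ \\ j}\right]$ via Corollary~\ref{Cor_partition_interpretation}. The only difference is cosmetic (your reindexing $\alpha_{k-j} = \beta_k$ versus the paper's direct exponent manipulation), so there is nothing to add.
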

\begin{proof}
On setting $ z_j = z q^j $ in \eqref{eq_rogers_ramanujan_main_thm}, for $ j=1 $ to $ N $, we obtain
\begin{align*}
	\prod_{j=1}^{N} \frac{1}{(1 - zq^j)} &=	 1 + \sum_{j=1}^{N} \, \, \,  \sum_{ \substack{
			0 \leq \,  \beta_j, \beta_{j+1}, \cdots, \beta_N \leq \,  j \\  \sum_{k=j}^{N} \beta_k = j
	}}   \quad  \,  \frac{\prod_{k=j}^{N} (z q^k)^{\beta_k} }{(1-z q) (1-z q^2) \ldots (1-z q^j)} \\
&=	 1 + \sum_{j=1}^{N} \, \, \,  \sum_{ \substack{
		0 \leq \,  \beta_j, \beta_{j+1}, \cdots, \beta_N \leq \,  j \\  \sum_{k=j}^{N} \beta_k = j
}}   \quad  \,   \frac{ z^{ \sum_{k=j}^{N} \beta_k} (q^j)^{ \sum_{k=j}^{N} \beta_k} q^{{\sum_{k=1}^{N-j}k \beta_{j+k}}}}{(1-z q) (1-z q^2) \ldots (1-z q^j)} \\
 &=	 1 + \sum_{j=1}^{N} \, \, \, \frac{z^j q^{j^2}}{(1-z q) (1-z q^2) \ldots (1-z q^j)} \, \,  \sum_{ \substack{
		0 \leq \,  \beta_j, \beta_{j+1}, \cdots, \beta_N \leq \,  j \\  \sum_{k=j}^{N} \beta_k = j
}}   \quad  \, q^{ \,  {\sum_{k=1}^{N-j}k \beta_{j+k}} }\\
&=	 1 + \sum_{j=1}^{N} \, \, \, \frac{z^j q^{j^2}}{(1-z q) (1-z q^2) \ldots (1-z q^j)} \, \,    \left[\substack{N \\ \\  j} \right]. 
\end{align*}
Here the last summation equals $ \left[\substack{N \\ \\  j} \right] $ by Corollary \ref{Cor_partition_interpretation}.
\end{proof}
The next result was known to Cauchy.  
\begin{cor}
	\begin{equation}\label{cor_Cauchy}
		\frac{1}{(zq;q)_{\infty}}  =  \sum_{j=0}^{\infty} \, \frac{z^j q^{j^2}}{ (zq;q)_j (q)_j }.
	\end{equation}
\begin{proof}
Take $ N \to \infty $ in Corollary  \ref{cor_rogers_ramanujan_main} and	use Tannery's theorem to change the order of limits and the summations.
\end{proof}
\end{cor}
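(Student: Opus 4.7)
The plan is to obtain this identity as the $N \to \infty$ limit of the finite Corollary~\ref{cor_rogers_ramanujan_main}, justifying the interchange of limit and summation by Tannery's theorem (exactly the strategy announced immediately after the statement). On the left side of \eqref{eq_rogers_ramanujan_main}, the product $\prod_{j=1}^{N}(1-zq^j)^{-1}$ tends to $1/(zq;q)_{\infty}$ by the definition in \eqref{eq_def_q_notation}. On the right, for each fixed $j$ the factor $z^j q^{j^2}/\bigl((1-zq)\cdots(1-zq^j)\bigr)$ is $N$-independent and equals $z^j q^{j^2}/(zq;q)_j$, so one only needs the pointwise limit
\[
\lim_{N\to\infty}\left[\substack{N \\ \\ j}\right]
=\lim_{N\to\infty}\frac{(q)_N}{(q)_j(q)_{N-j}}=\frac{1}{(q)_j},
\]
after which the term-by-term limit produces precisely the right side of \eqref{cor_Cauchy}.

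The main obstacle is producing a summable, $N$-independent majorant for the summands so that Tannery's theorem applies. Here I would exploit Corollary~\ref{Cor_partition_interpretation}: since $\left[\substack{N \\ \\ j}\right]$ is a polynomial in $q$ with non-negative integer coefficients, the triangle inequality gives
\[
\left|\left[\substack{N \\ \\ j}\right]\right|\leq\left[\substack{N \\ \\ j}\right]_{|q|}
=\frac{\prod_{m=N-j+1}^{N}(1-|q|^m)}{\prod_{m=1}^{j}(1-|q|^m)}\leq\frac{1}{(|q|;|q|)_j},
\]
uniformly in $N$, since each factor in the numerator is bounded by $1$. Under the implicit convergence conditions on $z$ (so that $(zq;q)_\infty\neq 0$), one also has a uniform positive lower bound $|(zq;q)_j|\geq c>0$ for all $j$, and hence each summand is dominated in absolute value by a constant multiple of $|z|^j|q|^{j^2}/(|q|;|q|)_j$. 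The super-exponential decay $|q|^{j^2}$ drives summability of this majorant, so Tannery's theorem legitimizes the term-by-term passage to the limit and completes the proof.
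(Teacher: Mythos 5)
Your proof is correct and follows exactly the route the paper takes: pass to the limit $N\to\infty$ in Corollary~\ref{cor_rogers_ramanujan_main} and invoke Tannery's theorem, with the only $N$-dependence sitting in the Gaussian binomial, whose limit is $1/(q)_j$. The paper leaves the Tannery majorant implicit, whereas you supply it explicitly (via the non-negativity of the coefficients of the $q$-binomial from Corollary~\ref{Cor_partition_interpretation} and the uniform lower bound on $|(zq;q)_j|$), which is a welcome but not essentially different elaboration.
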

The following lemma is Lemma 2 in \cite{bressoud1983easy}. 
\begin{lem}\label{lem_eq_lemma_2}
	For a positive integer $ n $ and any real number $ a $, we have
	the following identity.
	\begin{equation}\label{eq_lemma_2}
	\sum_{m=-n}^{n} \frac{z^m q^{a m^2} }{(q)_{n-m} (q)_{n+m}} =\sum_{s=0}^n \frac{q^s}{(q)_{n-s}} 	\sum_{m=-s}^{s} \frac{z^m q^{(a-1)m^2} }{(q)_{s-m} (q)_{s+m}}.
	\end{equation}
\end{lem}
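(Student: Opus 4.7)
The plan is to reduce the identity to a coefficient-wise comparison in $z$ and then recognise the resulting one-variable identity as a specialisation of Corollary~\ref{cor_rogers_ramanujan_main}. Both sides of \eqref{eq_lemma_2} are Laurent polynomials in $z$, so it suffices to equate the coefficient of $z^m$ for each integer $m$ with $|m|\le n$. After interchanging the order of summation on the right so that, for each fixed $m$, the index $s$ runs from $|m|$ to $n$, the factor $q^{(a-1)m^2}$ cancels against $q^{am^2}$ on the left, eliminating the parameter $a$ altogether. Using the $m\leftrightarrow -m$ symmetry of both sides, the lemma reduces to the single identity
\begin{equation*}
\frac{q^{m^2}}{(q)_{n-m}(q)_{n+m}} \;=\; \sum_{s=m}^n \frac{q^{s^2}}{(q)_{n-s}(q)_{s-m}(q)_{s+m}}, \qquad 0\le m\le n.
\end{equation*}

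To establish this, I would specialise the already-proved Corollary~\ref{cor_rogers_ramanujan_main} at $z=q^{2m}$ and $N=n-m$, obtaining
\begin{equation*}
\frac{1}{(q^{2m+1};q)_{n-m}} \;=\; \sum_{j=0}^{n-m} \frac{q^{j^2+2mj}}{(q^{2m+1};q)_j}\,\left[\substack{n-m \\ \\ j}\right].
\end{equation*}
Using the telescoping identity $(q^{2m+1};q)_k=(q)_{2m+k}/(q)_{2m}$ to clear the shifted $q$-Pochhammer symbols, multiplying through by $q^{m^2}$ and invoking $q^{j^2+2mj+m^2}=q^{(j+m)^2}$ to complete the square, and finally relabelling $s=j+m$, converts the displayed equality into
\begin{equation*}
\frac{q^{m^2}(q)_{2m}}{(q)_{n+m}} \;=\; (q)_{2m}\,(q)_{n-m}\sum_{s=m}^n \frac{q^{s^2}}{(q)_{s+m}(q)_{s-m}(q)_{n-s}}.
\end{equation*}
Cancelling the common factor $(q)_{2m}$ and dividing by $(q)_{n-m}$ recovers precisely the target identity isolated in the first paragraph, completing the proof.

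The main obstacle is choosing the correct specialisation of Corollary~\ref{cor_rogers_ramanujan_main}. The guiding idea is that the summand on the right of the lemma carries the weight $q^{s^2}$, which must be matched with the factor $z^j q^{j^2}$ sitting inside the corollary; the completion of squares $j^2+2mj+m^2=(j+m)^2$ then forces the choice $z=q^{2m}$, and the shift $N=n-m$ is dictated by making the upper limit of summation come out correctly after the index change $s=j+m$. Once this specialisation is spotted, the remainder of the argument is a routine manipulation of $q$-factorials, and no further combinatorial input is needed beyond the tiling proof of Corollary~\ref{cor_rogers_ramanujan_main} itself.
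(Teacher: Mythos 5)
Your proof is correct and follows essentially the same route as the paper's (which simply defers to Bressoud's Lemma~2): the key step in both is to expand $1/(q)_{n+m}$ via Corollary~\ref{cor_rogers_ramanujan_main} specialized at $z=q^{2m}$, $N=n-m$, and the coefficient-of-$z^m$ bookkeeping you carry out is exactly the interchange of summation done there. One remark: your reduced identity carries the weight $q^{s^2}$, whereas the lemma as printed has $q^{s}$ in the outer sum; you have silently corrected a typo in the statement, since the version with $q^{s}$ already fails numerically at $n=2$, $a=1$, $z=1$, and the later application to the Rogers--Ramanujan identities uses the $q^{s^2}$ form.
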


\begin{proof}
	The proof follows by expanding $ \frac{1}{(q)_{n+m}} $ using Corollary~\ref{cor_rogers_ramanujan_main} and substituting the resulting expression in left side of \eqref{eq_lemma_2}, and carrying out certain algebraic manipulations. See the proof of Lemma 2, \cite{bressoud1983easy}, for complete details.
\end{proof}

The Rogers-Ramanujan identity can now be obtained by using Lemma \ref{lem_eq_lemma_2}, exactly following \cite{bressoud1983easy}.
\begin{thm} [Rogers-Ramanujan Identities]
	\begin{align}
	\sum_{m=0}^{\infty} \, \frac{q^{m^2}}{(q)_m} &= \prod_{m=0}^{\infty} \, \frac{1}{(1-q^{5m+1})(1-q^{5m+4})}  \label{eq_Roger_Ramanujan_one}\\
	\sum_{m=0}^{\infty} \, \frac{q^{m^2+m}}{(q)_m} &= \prod_{m=0}^{\infty} \, \frac{1}{(1-q^{5m+2})(1-q^{5m+3})} \label{eq_Roger_Ramanujan_two}.
	\end{align}
\end{thm}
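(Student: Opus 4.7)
The plan is to replicate Bressoud's argument in \cite{bressoud1983easy}: iterate Lemma \ref{lem_eq_lemma_2}, collapse the innermost sum via the $q$-binomial theorem (Theorem \ref{thm_bm}), pass to $n \to \infty$ using Tannery's theorem, and then apply the Jacobi triple product identity (Theorem \ref{thm_Jacobi}) to recognize the alternating series on the resulting left-hand side as the product side of Rogers-Ramanujan.

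For \eqref{eq_Roger_Ramanujan_one}, set $z = -q^{-1/2}$ and $a = 5/2$ in Lemma \ref{lem_eq_lemma_2}, so the left-hand side becomes $\sum_{m=-n}^{n} \frac{(-1)^m q^{m(5m-1)/2}}{(q)_{n-m}(q)_{n+m}}$. Apply the lemma twice, lowering $a$ first to $3/2$ and then to $1/2$. The innermost sum has the form $\sum_{m=-t}^t \frac{(-1)^m q^{m(m-1)/2}}{(q)_{t-m}(q)_{t+m}}$; the change of variable $j = t+m$ together with Theorem \ref{thm_bm} identifies it with $\frac{(-1)^t q^{t(t+1)/2}}{(q)_{2t}} \prod_{k=0}^{2t-1}(1 - q^{k-t})$, and this product vanishes for every $t \geq 1$ because of the factor $1 - q^{0}$ at $k = t$, while it is the empty product $1$ for $t = 0$. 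Thus the innermost sum equals the Kronecker delta $\delta_{t,0}$, and the double iteration telescopes to a single series. Letting $n \to \infty$ via Tannery (the $n$-dependence is confined to the factors $1/(q)_{n-s}$, which admit a uniform majorant $1/(q)_\infty$) yields
\begin{equation*}
\frac{1}{(q)_\infty^2} \sum_{m \in \ZZ} (-1)^m q^{m(5m-1)/2} = \frac{1}{(q)_\infty} \sum_{s=0}^\infty \frac{q^{s^2}}{(q)_s}.
\end{equation*}
Theorem \ref{thm_Jacobi} applied with $q \mapsto q^5$ and $z = -q^{-3}$ evaluates the alternating sum as $(q^2, q^3, q^5; q^5)_\infty$; dividing by $(q)_\infty = \prod_{k=1}^{5}(q^k; q^5)_\infty$ and cancelling common factors gives $\sum_{s=0}^{\infty} q^{s^2}/(q)_s = 1/[(q;q^5)_\infty (q^4;q^5)_\infty]$, which is \eqref{eq_Roger_Ramanujan_one}.

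For \eqref{eq_Roger_Ramanujan_two} the same strategy applies with $z = -q^{3/2}$ and $a = 5/2$, so the left-hand side becomes $\sum_m (-1)^m q^{m(5m+3)/2}/((q)_{n-m}(q)_{n+m})$. After two iterations the innermost sum is again evaluated by Theorem \ref{thm_bm}, but now the critical vanishing factor appears only for $t \geq 2$, leaving nontrivial values $1$ at $t = 0$ and $-q^{-1}$ at $t = 1$. These two contributions combine through the elementary telescoping $1/(q)_s - 1/(q)_{s-1} = q^s/(q)_s$ to produce the factor $\sum_s q^{s^2+s}/((q)_{n-s}(q)_s)$ on the right. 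Taking $n \to \infty$ and applying Theorem \ref{thm_Jacobi} with $q \mapsto q^5$ and $z = -q^{-1}$, the alternating sum evaluates to $(q, q^4, q^5; q^5)_\infty$, and division by $(q)_\infty$ gives \eqref{eq_Roger_Ramanujan_two}.

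The main obstacle is the parameter bookkeeping in iterating Lemma \ref{lem_eq_lemma_2}, especially for the second identity where the innermost sum does not collapse to a single Kronecker delta but must be handled as a two-term remainder that telescopes only after being combined with the outer $q$-factor. The application of Tannery's theorem at $n \to \infty$ is straightforward but must be made explicit, since both sides involve sums whose individual terms depend on $n$.
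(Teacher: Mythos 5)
Your proposal is correct and follows essentially the same route as the paper: both iterate Lemma \ref{lem_eq_lemma_2} twice to lower the exponent coefficient from $5/2$ to $1/2$, collapse the innermost sum via the finite $q$-binomial/Jacobi identity (it vanishes for $t\geq 1$ in the first case, and leaves the two-term remainder telescoping via $1/(q)_s - 1/(q)_{s-1} = q^s/(q)_s$ in the second), pass to the limit by Tannery, and invoke the Jacobi triple product with $q\mapsto q^5$. The only differences are cosmetic (you work with the exponent $m(5m-1)/2$ rather than $(5m^2+m)/2$, and you spell out the second identity's remainder computation, which the paper leaves as ``proceeding just like before'').
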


\begin{proof}
	We simplify the right side of \eqref{eq_Roger_Ramanujan_one} using Jacobi triple product identity as follows,
\begin{align}
\prod_{m=0}^{\infty} \, \frac{1}{(1-q^{5m+1})(1-q^{5m+4})} & = \frac{1}{(q)_{\infty}}\prod_{m=0}^{\infty} \, \frac{1- q^{m+1}}{(1-q^{5m+1})(1-q^{5m+4})} \nonumber \\
&=  \frac{1}{(q)_{\infty}}\prod_{m=1}^{\infty} \, (1-q^{5m}) (1-q^{5m-2})(1-q^{5m-3}) \nonumber \\
&=   \frac{1}{(q)_{\infty}}\sum_{m=-\infty}^{\infty} \, (-1)^m q^{\frac{5m^2 +m}{2}} \,\,\, \text{(from triple product identity)} \nonumber \\
&= \lim\limits_{N \to \infty} \sum_{m=-N}^{N} \, (-1)^m q^{\frac{5m^2+m}{2}}   \left[\substack{2N \\ \\  N-m} \right], \label{eq_proof_Roger_one}
\end{align}
where at the last step we have employed Tannery’s theorem, as we did in our proof of Jacobi triple product identity.
Next, at the first step in the following, we apply Lemma \ref{lem_eq_lemma_2} twice to reduce the coefficient of $ m^2 $ to $ \frac{1}{2} $, and then in the next step we use \eqref{eq_both_terms} with $ z=-1 $ to reduce the inner sum to a product.
\begin{align*}
\sum_{m=-N}^{N} \,\frac{(-1)^m q^{\frac{5m^2+m}{2}}}{(q)_{N-m} (q)_{N+m}}   &=  \sum_{s=0}^N \frac{q^{s^2}}{(q)_{N-s}} \sum_{t=0}^{s} \frac{q^{t^2}}{(q)_{s-t}} \sum_{m=-t}^{t} \, (-1)^m q^{\frac{m^2+m}{2}}     \left[\substack{2t \\ \\  t-m} \right] \, \frac{1}{(q)_{2t}} \\
 &=  \sum_{s=0}^N \frac{q^{s^2}}{(q)_{N-s}} \sum_{t=0}^{s} \frac{q^{t^2} }{(q)_{s-t}} \prod_{m=1}^{t} \, (1 -q^m) (1- q^{m-1}) \,  \frac{1}{(q)_{2t}} \\
&=   \sum_{s=0}^N \frac{q^{s^2}  }{(q)_{s} (q)_{N-s}}.
\end{align*}
Here the last step follows from the fact that the product on the right, in the previous step, survives only for $ t=0 $. Therefore,
\begin{align*}
\lim\limits_{N \to \infty} \sum_{m=0}^{N} \, (-1)^m q^{\frac{5m^2+m}{2}}   \left[\substack{2N \\ \\  N-m} \right] = \lim\limits_{N \to \infty}   \sum_{s=0}^N \frac{q^{s^2} \, (q)_{2N} }{(q)_{s} (q)_{N-s}}  = 	\sum_{s=0}^{\infty} \, \frac{q^{s^2}}{(q)_s}.
 \end{align*}
This concludes the proof of first identity \eqref{eq_Roger_Ramanujan_one}. Similarly, one can obtain the following using Jacobi triple product identity
\begin{equation*}
\prod_{m=0}^{\infty} \, \frac{1}{(1-q^{5m+2})(1-q^{5m+3})}  =  \frac{1}{(q)_{\infty}}\sum_{m=0}^{\infty} \, (-1)^m q^{\frac{5m^2 + 3m}{2}},
\end{equation*}
and then proceeding just like before the second identity \eqref{eq_Roger_Ramanujan_two} can be proved.

\end{proof}

\begin{rmk}
	Since, the following more general Theorem from \cite{bressoud1983easy} is proved using Lemma 1 and Lemma 2 in \cite{bressoud1983easy}, which are our Lemma \ref{cor_rogers_ramanujan_main} and \ref{lem_eq_lemma_2} respectively, we have essentially provided a tiling proof of this result.
	\begin{align*}\label{eq_thm_bresoud}
	\sum_{s_1,\cdots , s_k }  & \frac{q^{s_1^2 + \cdots s_k^2}}{(q)_{N- s_1} (q)_{s_1 -s_2} (q)_{s_{k-1} - s_{k-2}} (q)_{2s_k}} \prod_{1}^{s_k} (1 + z q^m ) (1 + z^{-1}q^{m-1}) \\
& 	= (q)_{2N}^{-1} \sum_{m} (-1)^m z^m q^{((2k+1)m^2 + m)/2}  \left[\substack{2N \\ \\  N-m} \right].
 	\end{align*}
\end{rmk}

\section{Proving some identities of Rogers using dominoes and squares} \label{Sec_Rogers}

Consider tilings of a $ 1 \times \infty $ board $ F $ using dominoes and white squares. Such tilings were considered in \cite{MR2587022}. 
 Although our approach is very similar to \cite{MR2587022}, our method differs from  \cite{MR2587022} in that we do not use projections of dominoes, 
 instead we obtain recurrence relations just as we did in the previous sections. 
In this section, any tiling of the board will consist of a finite number of dominoes and an infinite number of white squares. A domino for us will be a pair of tiles `glued' together such that the left tile in the pair is a black tile and the right tile in the pair is a white tile. 
 We say that a domino is placed at the $ k $-th position on the board, if the domino is placed on the board such that the `black tile' of the domino
is at the $ k $-th position and the `white tile' of the domino is at the $ (k+1) $-th position. We can use similar definitions of local weight, total weight, and generating functions of tilings as given in Section \ref{sec_basic_setup} (see Def.~\ref{def_local_weight} through Def.~\ref{def_generating_fun}.) For example, let the associated weight function for the board $ F $  be given by $ F_w(i) = z q^i $. Then for a tiling $ T $ of $ F $, its local weight $ W_T(i) $ at the position $ i $ is defined as 
\begin{equation}\label{eq_local_weight_dominoes}
	W_T(i) :=  \begin{cases} F_w(i)=zq^i,  & \text{if a domino (or equivalently its `black tile') is at the $ i $-th position, } \\
		1, & \text{if the tile at the $ i $-th position is white.} \end{cases}
\end{equation}

Next, we prove the following theorem using the tiling of the board $ F $ by dominoes and white squares.

\begin{thm}\label{thm_theorem_rogers}
	\begin{equation}\label{theorem_rogers}
		\sum_{m=0}^{\infty}	\frac{z^m q^{m^2}}{ (q)_m} =  (-zq^2;q^2)_{\infty} 	\sum_{m=0}^{\infty} \frac{z^m q^{m^2}}{(q^2;q^2)_{m} (-zq^2;q^2)_{m} }
	\end{equation}
\begin{equation}\label{theorem_rogers_two}
	\sum_{m=0}^{\infty}	\frac{z^m q^{m^2+m}}{ (q)_m} =  (-zq^2;q^2)_{\infty} 	\sum_{m=0}^{\infty} \frac{z^m q^{m^2+2m}}{(q^2;q^2)_{m} (-zq^2;q^2)_{m} }.
\end{equation}
\end{thm}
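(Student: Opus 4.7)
Following the setup at the opening of Section~\ref{Sec_Rogers}, the plan is to evaluate the generating function of all tilings of the $1\times\infty$ board $F$ (with weight $F_w(i)=zq^i$, tiled by $(\text{black},\text{white})$ dominoes and white squares) in two different ways, yielding respectively the two sides of \eqref{theorem_rogers}; identity \eqref{theorem_rogers_two} will then follow by the same mechanism applied to $(\text{white},\text{black})$ dominoes.

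For the left-hand side I would mimic the Euler-style recurrence from Section~\ref{sec_basic_setup}. Let $F(k,m)$ denote the generating function of tilings using exactly $m$ dominoes whose black halves all sit at positions $\ge k$ (the domino analogue of Definition~\ref{def_bmk_bm}, with local weight \eqref{eq_local_weight_dominoes}). Splitting on the tile at position~$1$ gives $F(1,m)=F(2,m)+zq\,F(3,m-1)$, and translating a tiling rightward by one position multiplies its weight by $q^m$, so $F(k+1,m)=q^m F(k,m)$. Combining produces $F(m)=\tfrac{zq^{2m-1}}{1-q^m}\,F(m-1)$; iterating from $F(0)=1$ yields $F(m)=\tfrac{z^m q^{m^2}}{(q)_m}$, and summing over $m\ge 0$ gives the left side of \eqref{theorem_rogers}.

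For the right-hand side I would re-tabulate the same tilings by two statistics: $m=$~number of odd-positioned dominoes and $k=$~number of even-positioned dominoes (where a domino is called \emph{odd} or \emph{even} according to the parity of its black half). The central claim is that for fixed $(m,k)$ the joint generating function factors as
\[
\frac{z^m q^{m^2}}{(q^2;q^2)_m}\cdot\frac{z^k q^{k^2+k+2mk}}{(q^2;q^2)_k}.
\]
Granting this, Euler's identity (applied with $q\to q^2$ and $w=zq^{2m}$) gives $\sum_{k\ge 0}\tfrac{w^k q^{k^2+k}}{(q^2;q^2)_k}=(-wq^2;q^2)_\infty$, so summing over $k$ and then $m$ produces $\sum_{m\ge 0}\tfrac{z^m q^{m^2}}{(q^2;q^2)_m}(-zq^{2m+2};q^2)_\infty=(-zq^2;q^2)_\infty\sum_{m\ge 0}\tfrac{z^m q^{m^2}}{(q^2;q^2)_m(-zq^2;q^2)_m}$, the right side of \eqref{theorem_rogers}. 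To prove the factorization I would exhibit a weight-preserving bijection between tilings of $F$ with $m$ odd and $k$ even dominoes and pairs of weakly increasing nonnegative integer sequences $0\le\sigma_1\le\cdots\le\sigma_m$ and $0\le\tau_1\le\cdots\le\tau_k$, given on the naive side by $o_l=2l-1+2\sigma_l$ and $e_j=2(m+j)+2\tau_j$. A direct calculation shows that $\sum o_l+\sum e_j=m^2+k^2+k+2mk+2\sum\sigma_l+2\sum\tau_j$, matching the exponent predicted by the factored expression.

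The main obstacle is that the naive map is not always valid: if $\sigma_m$ is too large relative to $\tau_1$, then the largest odd $o_m$ and the smallest even $e_1$ overlap, and conversely some genuine tilings (with certain even dominoes lying to the left of some odd dominoes) are not hit directly. I would handle this by a \emph{swap-and-repair} procedure: whenever an overlap occurs at some $o_l,e_j$ with $e_j=o_l\pm 1$, replace $(o_l,e_j)$ by $(o_l+2,e_j-2)$, a substitution that preserves $o_l+e_j$ and hence the weight; iterate until no overlap remains and the result does not collide with any naive-valid image. Showing that this iteration always terminates in a unique valid tiling and that every valid tiling is hit exactly once is the technical heart of the argument; modulo this bijection, equating the two evaluations of the generating function proves \eqref{theorem_rogers}. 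Identity \eqref{theorem_rogers_two} is obtained from the identical argument with $(\text{white},\text{black})$ dominoes: the weight is now picked up at the black \emph{right} half, so the leftmost $m$-domino tiling places the black halves at $2,4,\ldots,2m$ (producing $q^{m^2+m}$ on the left) and the smallest odd black-position becomes $3$ rather than $1$ (producing $q^{m^2+2m}$ in the sum on the right), while the even-black factor $(-zq^2;q^2)_\infty$ and $(-zq^2;q^2)_m$ are unchanged.
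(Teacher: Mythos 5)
Your derivation of the left-hand side is exactly the paper's: the recurrences $F(1,m)=F(2,m)+zq\,F(3,m-1)$ and $F(k+1,m)=q^{km}F(1,m)$ give $F(1,m)=\tfrac{zq^{2m-1}}{1-q^m}F(1,m-1)$ and hence \eqref{eq_fonem}. For the right-hand side, however, you have replaced the paper's argument with a strictly stronger combinatorial claim — a closed product formula
$\tfrac{z^{m+k}q^{m^2+k^2+k+2mk}}{(q^2;q^2)_m\,(q^2;q^2)_k}$
for the bivariate generating function refined by the number $m$ of odd dominoes \emph{and} the number $k$ of even dominoes — and the proof of that claim is where the gap lies. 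The formula itself is correct (it is consistent with the paper's \eqref{eq_roger_dominoes_recur_six} after summing over $k$ via Euler's identity, and it checks out in small cases), but your justification rests on a ``swap-and-repair'' bijection whose well-definedness you do not establish. Concretely: the repair move $(o_l,e_j)\mapsto(o_l+2,e_j-2)$ can create a new overlap with the next domino on either side, so you must show the iteration terminates; you must show the terminal configuration is independent of the order in which overlaps are repaired; and you must show the resulting map is a bijection onto all valid tilings (in particular that configurations where even dominoes genuinely interleave with, or precede, odd dominoes are each hit exactly once). None of this is routine — the parity constraint forces consecutive dominoes of \emph{opposite} parity to have positions differing by at least $3$ rather than $2$, which is precisely the non-product-like interaction your bijection has to absorb — and you explicitly defer it. As written, the proof is incomplete at its central step.

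It is worth seeing how the paper avoids this entirely. Rather than refining by $(m,k)$, it fixes only the number $m$ of odd dominoes, lets the even dominoes be arbitrary, and derives a recurrence in $m$ alone: the base case \eqref{eq_roger_dominoe_recur_zero} gives $G(1,0)=(-zq^2;q^2)_\infty$ directly, the first-tile decomposition gives \eqref{eq_roger_dominoe_recur_three}, and a shifting argument (shift an odd domino by two cells if it is followed by two white squares, otherwise shift the odd domino and the last even domino of the adjacent block by one cell each) gives $G(2,m)=q^{2m}G(1,m)$ and $G(4,m)=q^{2m}G(3,m)$, whence \eqref{eq_roger_dominoe_recur_five} and \eqref{eq_roger_dominoes_recur_six}. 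The interaction between odd and even dominoes that your bijection must untangle by hand is absorbed there into a single local shifting rule. If you want to salvage your route, the cleanest fix is to abandon the global swap-and-repair and instead prove your factorization by a recurrence in $k$ for fixed $m$ (or vice versa), using exactly this kind of shift; alternatively, adopt the paper's univariate $G(1,m)$ and you need no bijection at all. Your reduction of \eqref{theorem_rogers_two} to the same mechanism with the black cell on the right is fine and is essentially equivalent to the paper's use of $\sum_m F(2,m)=\sum_m G(2,m)$.
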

\begin{proof}
	Let $ F $ be the $ 1 \times \infty $ board with its associated weight function $ F_w(i) = z q^i $, as described earlier in the beginning of this section. 
	Let $ F(k,m) $ be the generating function associated to all possible tilings of the board $ F $ with exactly $ m $ dominoes, such that the tilings begin at the $ k $-th  position on the board (i.e., all the positions $ 1  $ through $ k-1 $ have white tiles.)

	On using an argument, similar to the one used to obtain \eqref{eq_euler_recur_two} and \eqref{eq_euler_recur_general}, we get
	\begin{equation}\label{eq_recur_roger_one_two}
		F(2,m) = q^{m} F(1,m), \qquad F(k+1,m) = q^{km} F(1,m).
	\end{equation}
	Since the first tile in any tiling could be either a domino or a white square, we have
	\begin{equation}\label{eq_recur_dominoes}
		F(1,m) = F(2,m) + zq F(3,m-1) = q^m F(1,m) + zq\cdot q^{2(m-1)} F(1,m-1). 
	\end{equation}
	It follows that
	\begin{equation}\label{eq_recur_dominoes_final}
		F(1,m) = \frac{z q^{2m-1}}{1-q^m} F(1,m-1).
	\end{equation}
Clearly $ F(1,0) =1 $. Therefore, we get
	\begin{equation}\label{eq_fonem}
		F(1,m) = \frac{z^m q^{m^2}}{ (q)_m}.
	\end{equation}
	
	 Let us call a domino  an `odd-domino', if it is placed at an odd position on the board. 
	Let $ G(k,m) $ denote the generating function associated to all possible tilings containing exactly $ m $ odd-dominoes with the condition that the tilings  begin at the $ k $-th  position on the board (i.e., the positions $ 1  $ through $ k-1 $ have white tiles.)
	
	To prove \eqref{theorem_rogers} we will use the following equality
	\begin{align}\label{eq_rogers_gen_one}
		\sum_{m=0}^{\infty} \, F(1,m) = \sum_{m=0}^{\infty} \, G(1,m), 
	\end{align}
which is clearly true because both the left and the right sides represent the generating function of all the possible tilings of the board $ F $. 

The expression for $F(1,m)$ was already obtained  in \eqref{eq_fonem}. Now $ G(1,m) $ needs to be computed. First, we note that
	\begin{equation}\label{eq_roger_dominoe_recur_zero}
		G(1,0) = \prod_{m=1}^{\infty} (1 + z q^{2m}) = (-zq^2;q^2)_{\infty},
	\end{equation}
because, this is the case with no odd-dominoes, and at each even position $ 2m $ one can place either a square or a domino and the factor $ (1 + zq^{2m}) $ represents this choice.
	
	In any tiling, the first tile can either be a domino or a white square. Therefore, we also get the following recurrence relations
	\begin{equation}\label{eq_roger_dominoe_recur_three}
		G(1,m) = G(2,m) +  zq  G(3,m-1), \qquad 	G(2,m) = G(3,m) +  zq^2  G(4,m).
	\end{equation}
	Consider any tiling $ T $ in $G(1,m) $ (i.e., $T$  is a tiling with $m$ odd-dominoes starting at the first position on the board). We describe how to shift tiles in $ T $ to get a tiling in $ G(2,m) $, going from the right to the left, starting from the right-most odd-domino. If any odd-domino in $ T $ is followed by two white tiles, then that odd-domino is shifted to the right by two units. If the odd-domino is followed by a white square, thereafter $ r $ even-dominoes, and after that by a white square, then the first odd-domino and the last $ r$-th even-domino both are shifted to the right by one unit. This shifting procedure keeps the number of odd-dominoes invariant, before and after the shifting. The weight of the tiling is increased by a factor of $ q^2$ for each odd-domino. Since there are $ m $ odd-dominoes, it follows that
	\begin{equation}\label{eq_roger_dominoe_recur_one}
		G(2,m) = q^{2m} G(1,m), \qquad 	G(4,m) = q^{2m} G(3,m).
	\end{equation}
	From equations \eqref{eq_roger_dominoe_recur_three} and \eqref{eq_roger_dominoe_recur_one} it follows that
	\begin{equation}\label{eq_roger_dominoe_recur_five}
		G(1,m) = \frac{zq^{2m-1}}{(1 - q^{2m}) (1 + z q^{2m}) } G(1,m-1).
	\end{equation}
From  \eqref{eq_roger_dominoe_recur_zero} and \eqref{eq_roger_dominoe_recur_five}	we obtain,
	\begin{equation}\label{eq_roger_dominoes_recur_six}
		G(1,m) =  (-zq^2;q^2)_{\infty} \frac{z^m q^{m^2}}{(q^2;q^2)_{m} (-zq^2;q^2)_{m} }.
	\end{equation}
Now \eqref{theorem_rogers} follows from \eqref{eq_fonem}, \eqref{eq_rogers_gen_one} and \eqref{eq_roger_dominoes_recur_six}.
Similarly, considering the equality of generating functions
	\begin{align}\label{eq_rogers_gen_two}
	\sum_{m=0}^{\infty} \, F(2,m) = \sum_{m=0}^{\infty} \, G(2,m), 
\end{align}
and noting that $ F(2,m) = q^m F(1,m) $ and $ G(2,m) = q^{2m} G(1,m) $, one can easily obtain \eqref{theorem_rogers_two}.
This completes the proof of the theorem.
\end{proof}
\begin{rmk}
Our proof of Theorem \ref{thm_theorem_rogers} unifies the proofs of Theorem 2 and Theorem 4 in \cite{MR2587022}.
\end{rmk}

We get the following first two identities (\eqref{eq_cor_one} and \eqref{eq_cor_two})  by setting $ z=1 $ and $ z =q $ in \eqref{theorem_rogers}. Similarly, on setting  $ z= \frac{1}{q} $ and $ z = 1 $ in \eqref{theorem_rogers_two}, we obtain the next two identities (\eqref{eq_cor_three} and \eqref{eq_cor_four}). 
\begin{cor}
	\begin{align}
			\sum_{m=0}^{\infty}	\frac{q^{m^2}}{ (q)_m} &=  (-q^2;q^2)_{\infty} 	\sum_{m=0}^{\infty} \frac{ q^{m^2}}{(q^4;q^4)_{m} }.  \label{eq_cor_one}\\
			\sum_{m=0}^{\infty}	\frac{ q^{m^2+m}}{ (q)_m} &=  (-q;q^2)_{\infty} 	\sum_{m=0}^{\infty} \frac{q^{m^2+m}}{(q^2;q^2)_{m} (-q;q^2)_{m+1} }.\label{eq_cor_two}  \\
			\sum_{m=0}^{\infty}	\frac{ q^{m^2}}{ (q)_m} &=  (-q;q^2)_{\infty} 	\sum_{m=0}^{\infty} \frac{ q^{m^2+m}}{(q^2;q^2)_{m} (-q;q^2)_{m} }. \label{eq_cor_three}\\
			\sum_{m=0}^{\infty}	\frac{ q^{m^2+m}}{ (q)_m} &=  (-q^2;q^2)_{\infty} 	\sum_{m=0}^{\infty} \frac{ q^{m^2+2m}}{(q^4;q^4)_{m}}.\label{eq_cor_four}
	\end{align}
\end{cor}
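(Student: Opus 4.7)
The plan is to derive all four identities by straightforward specialization of Theorem~\ref{thm_theorem_rogers}, using only elementary manipulations of $q$-Pochhammer symbols. The key algebraic fact I will invoke repeatedly is the factorization
\[
(q^2;q^2)_m \, (-q^2;q^2)_m = \prod_{k=1}^{m} (1-q^{2k})(1+q^{2k}) = \prod_{k=1}^{m} (1-q^{4k}) = (q^4;q^4)_m,
\]
which comes from $(1-x)(1+x) = 1-x^2$ applied with $x = q^{2k}$. This single identity collapses the two factors in the denominator of the right-hand side of \eqref{theorem_rogers} and \eqref{theorem_rogers_two} into a single one whenever the base in the $(-zq^2;q^2)_m$ factor becomes $q^2$ (i.e.\ when $z=1$).

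For \eqref{eq_cor_one}, I would set $z=1$ in \eqref{theorem_rogers}: the LHS becomes $\sum_m q^{m^2}/(q)_m$ directly, and on the RHS the product $(q^2;q^2)_m(-q^2;q^2)_m$ collapses to $(q^4;q^4)_m$ via the factorization above. For \eqref{eq_cor_four}, I would set $z=1$ in \eqref{theorem_rogers_two}, and again the same factorization produces the $(q^4;q^4)_m$ in the denominator; the extra factor $q^{2m}$ in the exponent comes directly from the numerator of \eqref{theorem_rogers_two}. For \eqref{eq_cor_three}, I would set $z=1/q$ in \eqref{theorem_rogers_two}: the factor $z^m$ shifts the exponents $q^{m^2+m}$ to $q^{m^2}$ on the LHS and $q^{m^2+2m}$ to $q^{m^2+m}$ on the RHS, while $(-zq^2;q^2)_\bullet$ becomes $(-q;q^2)_\bullet$ both at $\infty$ and at $m$, giving exactly the right-hand side claimed.

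The only case requiring a small additional identity is \eqref{eq_cor_two}, obtained by setting $z=q$ in \eqref{theorem_rogers}. The LHS clearly becomes $\sum_m q^{m^2+m}/(q)_m$. On the RHS one gets
\[
(-q^3;q^2)_\infty \sum_{m=0}^{\infty} \frac{q^{m^2+m}}{(q^2;q^2)_m \,(-q^3;q^2)_m},
\]
so I need to reconcile this with the stated form containing $(-q;q^2)_\infty$ and $(-q;q^2)_{m+1}$. The bridge is the observation that $(-q;q^2)_\infty = (1+q)\,(-q^3;q^2)_\infty$ and $(-q;q^2)_{m+1} = (1+q)\,(-q^3;q^2)_m$, so the factors of $(1+q)$ cancel and
\[
\frac{(-q;q^2)_\infty}{(-q;q^2)_{m+1}} = \frac{(-q^3;q^2)_\infty}{(-q^3;q^2)_m}.
\]
This converts one expression into the other termwise.

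I expect no real obstacle here: each identity is a bookkeeping exercise once the correct specialization of $z$ is chosen. The only place where a reader could stumble is matching the $(-q;q^2)$ factors in \eqref{eq_cor_two}, so I would make that step explicit. Otherwise the proof is a clean four-line verification, one line per identity, and the structure of the writeup would simply be to state each substitution, record the resulting expression from Theorem~\ref{thm_theorem_rogers}, and invoke the factorization $(q^2;q^2)_m(-q^2;q^2)_m = (q^4;q^4)_m$ (or the $(1+q)$-cancellation) as needed.
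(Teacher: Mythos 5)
Your proposal is correct and uses exactly the same substitutions the paper indicates ($z=1$ and $z=q$ in \eqref{theorem_rogers}, $z=1/q$ and $z=1$ in \eqref{theorem_rogers_two}); the paper states these specializations without further detail, and your bookkeeping with $(q^2;q^2)_m(-q^2;q^2)_m=(q^4;q^4)_m$ and the $(1+q)$-cancellation for \eqref{eq_cor_two} correctly fills in the omitted steps.
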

We note that the above identities were proved by Rogers. Corollary 3 and Corollary 5 in  \cite{MR2587022} give tiling proofs of these identities. 

\section{Some new recursive identities using tilings of a $ 1 \times \infty $ board}
Consider a $ 1 \times \infty $ board $ F $ with the following weight function
\begin{equation}\label{eq_recur_weight}
F_w(3n + j ) = z_j q^{n+1}, \qquad  \text{for }  n = 0,1,2,3, \cdots,  \text{ and $ j = 1,2,3 $.}
\end{equation}
So, the weights on $ F $ beginning at the first few positions are  $ z_1 q$, $z_2 q$, $z_3 q$, $z_1 q^2$, $z_2 q^2$, $z_3 q^2$, $z_1 q^3$, $z_2 q^3$, $z_3 q^3  \cdots. $

The generating function of all possible tilings of the board $ F $ is 
\begin{equation}\label{eq_gen_fun_F}
H(F,z_1,z_2,z_3,q) = \prod_{m=1}^{\infty} (1 + z_1 q^m) (1 + z_2 q^m) (1 + z_3 q^m).
\end{equation}
Let $ F(k,m) $ be as in the Definition \ref{def_bmk_bm}, i.e., it is the generating function for all the possible tilings of the board $ F $ using $ m $ black tiles starting at the position $ k $.
It is clear that,
\begin{equation*}
F(1,1) = \sum_{i=1}^{\infty} (z_1 + z_2 + z_3)q^i = \frac{q(z_1 + z_2 + z_3)}{1-q}.
\end{equation*}
We will find a recursion to compute $ F(m) := F(1,m) $ for all $ m $. Clearly, we also have
\begin{equation}\label{eq_gen_fun_F_two}
H(F,z_1,z_2,z_3,q) = \sum_{m=0}^{\infty} F(m).
\end{equation}
Therefore, by using \eqref{eq_gen_fun_F} and \eqref{eq_gen_fun_F_two} we will obtain an identity, which will turn out to be our next theorem.
\subsection{A triple product identity}
If a black tile is shifted  $ 3l $ positions to the right from its current position, then its weight will change by a factor of  $q^l $. Suppose a tiling $ T $ contains $ m $ black tiles and each of these $ m $ tiles is shifted to $ 3 l $ positions to the right, then the weight of $ T $ will change by a factor of $ q^{lm} $. We note that during this shifting the relative positions of all the black tiles remain the same, before and after the shift. Therefore, the effect of multiplication by $ q^{lm} $ on $ F(1,m) $ is to give the generating function $ F(3l+1,m) $. Similar arguments apply for $ q^{lm} F(2,m) $ and $ q^{lm}F(3,m) $. Therefore, we obtain the following recurrence relation for $ l=1 $.
\begin{align} 
F(4,m) &= q^{m} F(1,m). \label{eq_recur_one}
\end{align}
Next, we fix the first tile to be a black tile with contribution $ z_1 q $, then the remaining $ m-1 $ black tiles can be placed in tilings whose generating function would be $ F(2,m-1) $. The total contribution in this case is $ z_1q F(2,m-1)  $.  Then we consider the case when the first tile is white. In this case, the generating function of all such tilings is $ F(2,m) $. We can apply the same argument to  $ F(2,m) $ and $ F(3,m) $. Thereby, we get the following additional recurrence relations.
\begin{align}
F(1,m) &= z_1 q F(2,m-1) + F(2,m), \label{eq_recur_four} \\
F(2,m) &= z_2 q F(3,m-1) + F(3,m), \label{eq_recur_five} \\
F(3,m) &= z_3 q F(4,m-1) + F(4,m). \label{eq_recur_six}
\end{align}
We use \eqref{eq_recur_one}, and \eqref{eq_recur_six} to obtain the following.
\begin{equation}\label{eq_recur_seven}
F(3,m) = z_3 q F(4,m-1) + F(4,m) = z_3 q^{m} F(1,m-1) + q^m F(1,m).
\end{equation}
By using  \eqref{eq_recur_one} to \eqref{eq_recur_seven}, we get the following,
\begin{align*}
F(1,m) &= z_1 q F(2,m-1) + F(2,m) \\
& = z_1q (z_2 q F(3,m-2) + F(3,m-1 ) ) + z_2 q F(3,m-1) + F(3,m) \\
& =  z_1 z_2 q^2 F(3,m-2) + (z_1 + z_2)q F(3,m-1) + F(3,m) \\
& =  z_1 z_2 q^2 (z_3 q^{m-2} F(1,m-3) + q^{m-2} F(1,m-2 )) +  \\
& \quad  (z_1 + z_2)q ( z_3 q^{m-1} F(1,m-2) + q^{m-1} F(1,m-1)) + z_3 q^{m} F(1,m-1) + q^{m} F(1,m).
\end{align*}
Therefore,
\begin{align} \label{eq_recur_main_one}
F(1,m) & = \frac{q^{m}}{1-q^{m}} \left( (z_1 + z_2+ z_3) F(1,m-1) + (z_1 z_2 + z_2 z_3 + z_3 z_1) F(1,m-2)\right. \nonumber\\ 
&\left. \qquad  \qquad \qquad + z_1 z_2 z_3 F(1,m-3) \right).
\end{align}
We also have the following
\begin{align*}
F(1,0) &=1, F(1,1) = \frac{q(z_1 + z_2 + z_3)}{1-q}, \, \text{ and }  \\ 
F(1,2) &= \frac{q^2}{(1-q)(1-q^2)} \left(  q(z_1 + z_2 + z_3)^2 + (1-q) (z_1 z_2 + z_2 z_3 + z_3 z_1) \right).  \\ 
\end{align*}

\begin{rmk}\label{rmk_operator}
	The calculation to obtain \eqref{eq_recur_main_one} can be simplified by treating recursive relations in \eqref{eq_recur_one} to \eqref{eq_recur_seven} as operators as follows. Let $ D $ be an operator such that $ \alpha D^n\,  f(m) = \alpha f(m-n) $ for any  function $ f $, and constant $ \alpha \in \CC$, and $ n \in \NN $.  Using $ D $, \eqref{eq_recur_one} can be written as $ (z_1qD + 1) F(2,m) = F(1,m).$  Similarly, we can write,
\begin{align*}
F(1,m)	&= (z_1 q D + 1 ) (z_2 q D + 1 ) (z_3 q D + 1) F(4,m) \\
&= \left( z_1 z_2 z_3 q^3 D^3 + (z_1z_2 + z_2 z_3 + z_3 z_1) q^2 D^2 + (z_1 +z_2 +z_3) qD + 1 \right) q^m F(1,m) \\
& =  z_1 z_2 z_3 q^m F(1,m-3) + (z_1z_2 + z_2 z_3 + z_3 z_1) q^m F(1,m-2) \\ & \quad + (z_1 +z_2 +z_3) q^m F(1,m-1) + q^m F(1,m), 
\end{align*}
from which  \eqref{eq_recur_main_one} follows.
\end{rmk}

For $ m \geq 1 $, let
\begin{equation}
F(1,m)  =  \frac{q^{m} P(m) }{(1-q)(1-q^2)(1-q^3) \cdots  (1-q^{m})},          
\end{equation}
where $ P(m) $ a polynomial in $ q,z_1,z_2 $ and $ z_3 $ to be determined. 
By using \eqref{eq_recur_main_one} one can compute a recurrence for $ P(m)  $. Also, $ P(1), P(2) $ and $ P(3) $ can be explicitly computed.
From the above discussion we have the following result.
\begin{thm}\label{thm_main_new_recur}
	Let $ z_1, z_2, z_3 ,q $ be complex numbers with  $ |q| < 1 $. Then,
	\begin{align*}
\prod_{m=1}^{\infty} (1 + z_1 q^m) (1 + z_2 q^m) (1 + z_3 q^m) &= \sum_{m=0}^{\infty} 	\frac{q^{m} P(m) }{(1-q)(1-q^2)(1-q^3) \cdots  (1-q^{m})}, 
	\end{align*}
	where for $  m > 3$, $ P(m) $ is a polynomial recursively defined as
	\begin{align*} 
	P(m) &= q^{m-3} \left((z_1 + z_2 + z_3) \, q^2 P(m-1) + (z_1 z_2 + z_2 z_3 + z_3 z_1) \, q (1-q^{m-1})  P(m-2)\right. \nonumber \\
	& \qquad\left. + z_1 z_2 z_3 \, (1-q^{m-2})(1-q^{m-1}) P(m-3) \right), \\
	& \text{with } P(0) =1, \, \, P(1) = z_1 + z_2 + z_3, \, \, P(2) =  q(z_1 + z_2 + z_3)^2 + (1-q) (z_1 z_2 + z_2 z_3 + z_3 z_1).
	\end{align*}
\end{thm}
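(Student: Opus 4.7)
The plan is to apply the tiling method to the $1 \times \infty$ board $F$ described above, whose weight function $F_w(3n+j) = z_j q^{n+1}$ makes the generating function of all tilings equal to the left-hand product. Since each tile at position $i$ independently contributes either $1$ (white) or $F_w(i)$ (black), we have
\begin{equation*}
\prod_{m=1}^{\infty}(1+z_1 q^m)(1+z_2 q^m)(1+z_3 q^m) = \sum_{m=0}^{\infty} F(1,m),
\end{equation*}
where $F(1,m)$ is the generating function for tilings of $F$ with exactly $m$ black tiles. The problem reduces to finding a usable recurrence for $F(1,m)$, then repackaging it in the form $q^m P(m)/(q)_m$.

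The key step is to combine two kinds of recurrences. A translation argument shows that shifting every black tile in a tiling three positions to the right scales its weight by $q$ per tile, so $F(4,m) = q^m F(1,m)$. Conditioning on whether the first tile is a black tile of color $z_1$ or white gives $F(1,m) = z_1 q F(2,m-1) + F(2,m)$, and by the same token $F(2,m) = z_2 q F(3,m-1) + F(3,m)$ and $F(3,m) = z_3 q F(4,m-1) + F(4,m)$. The cleanest way to compose these is the operator formalism of Remark \ref{rmk_operator}: writing $F(1,m) = (z_1 qD+1)(z_2 qD+1)(z_3 qD+1) F(4,m)$ and expanding, the coefficients that appear are exactly the elementary symmetric polynomials $e_1, e_2, e_3$ in $z_1, z_2, z_3$. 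Substituting $F(4,m-j) = q^{m-j} F(1,m-j)$ and solving for $F(1,m)$ yields the three-term recurrence in \eqref{eq_recur_main_one}.

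The final step is to substitute $F(1,m) = q^m P(m)/(q)_m$ into \eqref{eq_recur_main_one} and clear denominators: the factors $(1-q^{m-1})$ and $(1-q^{m-2})$ appear from cancelling $(q)_m$ against $(q)_{m-2}$ and $(q)_{m-3}$, producing the claimed recurrence for $P(m)$. The base cases $P(0)=1$, $P(1)=z_1+z_2+z_3$, and $P(2)$ are obtained by direct computation of $F(1,0)$ (the empty tiling), $F(1,1) = q(z_1+z_2+z_3)/(1-q)$ from a single geometric series, and $F(1,2)$ from a double sum over the two black-tile positions. The main obstacle I anticipate is the algebraic bookkeeping: expanding the operator product, tracking the three symmetric-polynomial coefficients correctly, and verifying the delicate cancellations when converting the $F$-recurrence into the $P$-recurrence. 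Using the operator notation throughout should keep the computation transparent and reduce the chance of sign or index errors.
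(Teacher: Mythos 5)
Your proposal is correct and follows essentially the same route as the paper: the same board with weight function $F_w(3n+j)=z_jq^{n+1}$, the same shift relation $F(4,m)=q^mF(1,m)$ combined with the three first-tile conditioning recurrences (composed via the operator $D$, exactly as in Remark~\ref{rmk_operator}) to obtain \eqref{eq_recur_main_one}, and the same substitution $F(1,m)=q^mP(m)/(q)_m$ with the cancellation $(q)_m=(q)_{m-j}(1-q^{m-j+1})\cdots(1-q^m)$ producing the stated $P$-recurrence. No gaps.
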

\begin{rmk}
	On setting $ z_1 = z$ and $ z_2=z_3=0 $ in Theorem \ref{thm_main_new_recur}, one can recover Euler's identity given in 
	\eqref{eq_Euler}. One can obtain explicit non-recursive formulas in many special cases, for example when $ z_1 + z_2 =0 $ and $ z_3 =0 $.
\end{rmk}

On putting $ z_1 = z $, $ z_2 = z^{-1} q^{-1} $ and $ z_3 =-1 $ in Theorem \ref{thm_main_new_recur}, we get the following corollary. Note that the left hand sides of both the corollary and the Jacobi triple product identity in  \eqref{eq_thm_Jacobi} are the same.

\begin{cor}
	\begin{align}
	\prod_{m=1}^{\infty} (1 + z q^{m}) (1 + z^{-1} q^{m-1}) (1 - q^{m}) = \sum_{m=0}^{\infty} 	\frac{q^{m} Q(m) }{(1-q)(1-q^2)(1-q^3) \cdots  (1-q^{m})},
	\end{align}
	where $ Q(m) := P(m)|_{z_1 = z, z_2 = z^{-1} q^{-1}, z_3 =-1}$.
\end{cor}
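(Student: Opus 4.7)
The plan is essentially a direct substitution into Theorem~\ref{thm_main_new_recur}. There is no new combinatorial content required: the corollary is the specialization of the generating function identity at a particular choice of the formal parameters $z_1, z_2, z_3$.

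Concretely, I would set $z_1 = z$, $z_2 = z^{-1}q^{-1}$, and $z_3 = -1$ in the statement of Theorem~\ref{thm_main_new_recur}. On the product side, I would verify the three factors one at a time: $(1+z_1 q^m) = (1+zq^m)$ is immediate; $(1+z_2 q^m) = (1 + z^{-1}q^{-1}\cdot q^m) = (1+z^{-1}q^{m-1})$; and $(1+z_3 q^m) = (1-q^m)$. Multiplying these together inside the infinite product over $m\geq 1$ yields exactly the left-hand side of the corollary.

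On the sum side, the factor $q^m/\bigl((1-q)(1-q^2)\cdots(1-q^m)\bigr)$ does not depend on the $z_i$'s, and the recursion that defines $P(m)$ in Theorem~\ref{thm_main_new_recur} makes sense for any complex values of $z_1,z_2,z_3$, so specializing produces a well-defined polynomial sequence in $q$ (and in $z, z^{-1}$). By definition we set $Q(m) := P(m)\bigl|_{z_1=z,\, z_2 = z^{-1}q^{-1},\, z_3=-1}$, and the right-hand side of Theorem~\ref{thm_main_new_recur} becomes the right-hand side of the corollary.

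The only point that requires any comment, rather than a calculation, is a convergence/formal consistency check: Theorem~\ref{thm_main_new_recur} is stated for complex $z_1,z_2,z_3,q$ with $|q|<1$, and we need $z\neq 0$ for $z^{-1}$ to be defined. Under those hypotheses every term in the specialization is well-defined, and the infinite product on the left converges absolutely, so the specialization of the identity is valid term by term. There is no genuine obstacle; the corollary is essentially a restatement of the theorem under a specific parameter choice, and this is the only step to carry out.
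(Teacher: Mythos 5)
Your proposal is correct and is exactly the paper's own argument: the corollary is obtained by direct substitution of $z_1 = z$, $z_2 = z^{-1}q^{-1}$, $z_3 = -1$ into Theorem~\ref{thm_main_new_recur}, with $Q(m)$ defined as the corresponding specialization of $P(m)$. Your added remark about $z \neq 0$ is already covered by the paper's standing assumption in Section~\ref{sec_basic_setup}, so nothing further is needed.
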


\subsection{A generalized $ k $-product identity}
Let $ S_n^k (z_1, z_2, \cdots z_k) $ be the elementary symmetric polynomial of degree $ n $ in variables $ z_1, z_2, \cdots, z_k $, i.e.,
\begin{equation}\label{eq_elem_symmetric_poly}
S_n^k (z_1, z_2, \cdots ,z_k):= \sum_{1 \leq i_1 < i_2 < \cdots < i_n \leq k } z_{i_1} z_{i_2} \cdots z_{i_n}.
\end{equation}

We recall that Theorem~\ref{thm_main_new_recur} was obtained by considering tilings of a $ 1 \times \infty $ board with the associated weight function given by
\eqref{eq_recur_weight}. If instead, we consider a  $ 1 \times \infty $ board $ F $ with the weight function
\begin{equation} \label{eq_def_weight_fun_k}
F_w(kn + j ) = z_j q^{n+1}, \qquad \text{for $ j = 1,2,3, \cdots ,k$, and $ n = 0,1,2,3, \cdots $},
\end{equation}	
then the generating function of all possible tilings will be
\begin{equation*}
H(F, z_1, z_2, \cdots ,z_k,q) = 	\prod_{m=1}^{\infty} (1 + z_1 q^m) (1 + z_2 q^m) \cdots  (1 + z_k q^m).
\end{equation*}
Following the same argument as in the proof of Theorem~\ref{thm_main_new_recur}, we can get the following recurrences
\begin{equation} \label{eq_recur_k_main_one}
F(j,m) = z_j q F(j+1,m-1) + F(j+1,m), \qquad \text{for $ 1 \leq j \leq k  $},
\end{equation}
and 
\begin{equation}\label{eq_recur_k_main_two}
F(k+1,m) = q^{m} F(1,m),
\end{equation}
from which we get the following general theorem, where we write $ F(m)  $ for $ F(1,m) $ for easing the notation.
\begin{thm}\label{thm_main_new_recur_k}
	Let $ z_1, z_2,\cdots, z_k $ and $ q $ be complex numbers with  $ |q| < 1 $. Let $ S_n^k (z_1, z_2, \cdots ,z_k) $ be as defined in \eqref{eq_elem_symmetric_poly}. 
	Let	\begin{align*}
		\prod_{m=1}^{\infty} (1 + z_1 q^m) (1 + z_2 q^m) \cdots  (1 + z_k q^m) &= \sum_{m=0}^{\infty} 	 F(m). 
	\end{align*}
	Then $ F(m) $ can be recursively defined as 	\begin{align} \label{eq_recur_pm_k}
	& F(m) = \frac{q^m}{1-q^m}  \sum_{n=1}^{k}  S_n^k (z_1, z_2, \cdots ,z_k) \, F(m-n), \\
	&\text{with } F(m) =0 \quad \text{for $ m <0 $}, \quad F(0) =1, \quad F(1) = \frac{q}{1-q} \sum_{i=1}^{k} z_i =  \frac{q}{1-q} S_1^k(z_1, z_2, \cdots ,z_k).\nonumber
	\end{align}
		
\end{thm}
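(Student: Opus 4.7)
The strategy is to mirror the argument used for $k=3$ in Theorem \ref{thm_main_new_recur}, exploiting the two recurrences \eqref{eq_recur_k_main_one} and \eqref{eq_recur_k_main_two} together with the shift-operator formalism of Remark \ref{rmk_operator}. First I would set up the $1 \times \infty$ board $F$ with the weight function \eqref{eq_def_weight_fun_k} and note, exactly as in \eqref{eq_gen_fun_infinite}, that the generating function of all tilings equals both $\prod_{m=1}^{\infty}\prod_{j=1}^{k}(1 + z_j q^m)$ and $\sum_{m=0}^{\infty} F(m)$, where $F(m) := F(1,m)$ counts tilings with exactly $m$ black tiles (Definition \ref{def_bmk_bm}). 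The base cases are immediate: the all-white tiling gives $F(0)=1$, and for a single black tile at position $kn+j$ contributing $z_j q^{n+1}$, summing the geometric series over $n \geq 0$ and $1 \leq j \leq k$ gives $F(1) = \frac{q}{1-q} S_1^k(z_1, \dots, z_k)$.

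Next I would reprove the two recurrences. Recurrence \eqref{eq_recur_k_main_one} comes from inspecting the tile at position $j$: if it is black it contributes $z_j q$ and leaves $m-1$ black tiles to be placed from position $j+1$, giving $z_j q F(j+1, m-1)$; if it is white, the $m$ black tiles must fit from position $j+1$, giving $F(j+1, m)$. Recurrence \eqref{eq_recur_k_main_two} comes from the rigid shift by $k$ positions to the right: since each black tile's weight picks up a factor of $q$ under a shift of $k$ positions (by the structure of \eqref{eq_def_weight_fun_k}), shifting all $m$ black tiles simultaneously multiplies the weight by $q^m$, producing a weight-preserving bijection between tilings counted by $F(1,m)$ and those counted by $F(k+1,m)$.

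The heart of the argument is to combine these. Following Remark \ref{rmk_operator}, introduce the shift operator $D$ with $D \cdot g(m) = g(m-1)$, so that \eqref{eq_recur_k_main_one} reads $F(j,m) = (z_j q D + 1)\, F(j+1, m)$. Iterating for $j=1, 2, \dots, k$ and using the elementary identity $\prod_{j=1}^{k}(1 + z_j t) = \sum_{n=0}^{k} S_n^k(z_1, \dots, z_k)\, t^n$ applied with $t = qD$, I obtain
\begin{equation*}
F(1,m) \;=\; \prod_{j=1}^{k} (z_j q D + 1)\, F(k+1, m) \;=\; \sum_{n=0}^{k} S_n^k(z_1, \dots, z_k)\, (qD)^n\, F(k+1, m).
\end{equation*}
Applying $(qD)^n$ to $F(k+1, m) = q^m F(1, m)$ yields $q^n \cdot q^{m-n} F(1, m-n) = q^m F(1, m-n)$, so separating the $n=0$ term (with $S_0^k = 1$) and rearranging gives $(1 - q^m)\, F(1,m) = q^m \sum_{n=1}^{k} S_n^k(z_1, \dots, z_k) F(1, m-n)$, which is the desired recurrence \eqref{eq_recur_pm_k}.

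The main bookkeeping obstacle is simply checking that the operator expansion $\prod_j (z_j q D + 1) = \sum_n S_n^k (qD)^n$ is valid — which it is, since $qD$ commutes with constants and with itself, and since only finitely many terms are involved for each fixed $m$. No analytic subtleties appear at the level of the recurrence itself; convergence of $\sum_{m} F(m)$ to the infinite product for $|q|<1$ is already built into the tiling generating function set up in Section \ref{sec_basic_setup}, so the proof is complete once the recurrence is established.
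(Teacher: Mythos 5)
Your proposal is correct and follows essentially the same route as the paper: the paper likewise derives \eqref{eq_recur_k_main_one} and \eqref{eq_recur_k_main_two} from the tiling of the board with weight function \eqref{eq_def_weight_fun_k}, writes $F(1,m) = (z_1 qD+1)\cdots(z_k qD+1)F(k+1,m)$, expands via the elementary symmetric polynomials, and applies the operator to $q^m F(1,m)$ to isolate $(1-q^m)F(m)$. Your version just makes explicit a few steps (the base cases and the combinatorial justification of the two recurrences) that the paper delegates to the surrounding discussion.
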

\begin{proof}
	The proof of this theorem is similar to the proof of Theorem~\ref{thm_main_new_recur}. 
	First we show that  $ F(m) $, which is the generating function of all possible tilings of the board $ F $ with the associated weight function as defined in \eqref{eq_def_weight_fun_k},  satisfies \eqref{eq_recur_pm_k}. For obtaining \eqref{eq_recur_pm_k} we use the recurrence relations in \eqref{eq_recur_k_main_one} and \eqref{eq_recur_k_main_two}. We write these recurrences using operator $ D $, defined in Remark~\ref{rmk_operator}, and we obtain \eqref{eq_recur_pm_k} by the following calculation.
	\begin{align*}
	F(1,m) & = (z_1 qD + 1) (z_2 q D +1 ) \cdots (z_k q D + 1) F(k+1,m)  \\
	&= \left(\sum_{n=1}^{k} S_n^k(z_1, z_2, \cdots, z_k) q^n D^n \right) q^m F(1,m)  + q^m F(1,m),\\
\implies (1 -q^m) F(m)	&= q^m \sum_{n=1}^{k} S_n^k(z_1, z_2, \cdots, z_k) F(m-n).
	\end{align*}
	The result is now obvious.
\end{proof}
The proof of the following corollary is immediate from Theorem~\ref{thm_main_new_recur_k}.
\begin{cor}\label{cor_main_recur_k}
	Following the notation of Theorem~\ref{thm_main_new_recur_k}, for positive integers $ m $, define $ P(m) $ such that
\begin{equation*}
F(m)  =  \frac{q^{m} P(m) }{(q)_m}.       
\end{equation*}
Then,
\begin{align*}
\prod_{m=1}^{\infty} (1 + z_1 q^m) (1 + z_2 q^m) \cdots  (1 + z_k q^m) &= \sum_{m=0}^{\infty} 	\frac{q^{m} }{(q)_m} P(m). 
\end{align*}
Moreover, $ P(m) $ satisfies the following recurrence 
\begin{equation*}
P(m) = \sum_{i=1}^{k} \frac{q^{m-i} (q)_{m-1}}{(q)_{m-i}} \, S_i^k (z_1, z_2, \cdots, z_k) P(m-i).
\end{equation*}
\end{cor}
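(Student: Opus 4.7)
The plan is to deduce both assertions directly from Theorem~\ref{thm_main_new_recur_k} by performing the substitution $F(m) = q^m P(m)/(q)_m$ in each of its two conclusions. There is no new combinatorial content required; the entire task is algebraic bookkeeping with $q$-Pochhammer symbols. Since the corollary is explicitly flagged as ``immediate,'' I expect no genuine obstacle, only the need to keep the indices and $(q)_m$ factors straight.

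First I would handle the product-to-sum identity. Theorem~\ref{thm_main_new_recur_k} already gives
\[
\prod_{m=1}^{\infty}(1+z_1 q^m)(1+z_2 q^m)\cdots(1+z_k q^m) \;=\; \sum_{m=0}^{\infty} F(m).
\]
Defining $P(m)$ by $F(m) = q^m P(m)/(q)_m$ and substituting into the right-hand side instantly yields
\[
\sum_{m=0}^{\infty} \frac{q^m}{(q)_m} P(m),
\]
which is the first claim.

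Next I would derive the recurrence for $P(m)$ by rewriting the recurrence for $F(m)$ in \eqref{eq_recur_pm_k}, namely
\[
(1-q^m) F(m) \;=\; q^m \sum_{i=1}^{k} S_i^k(z_1,\ldots,z_k)\, F(m-i).
\]
Using $(q)_m = (1-q^m)(q)_{m-1}$, the left side becomes $q^m P(m)/(q)_{m-1}$, while on the right each term is $q^m \cdot q^{m-i} S_i^k P(m-i)/(q)_{m-i}$. Cancelling the common factor $q^m$ and multiplying through by $(q)_{m-1}$ gives
\[
P(m) \;=\; \sum_{i=1}^{k} \frac{q^{m-i}(q)_{m-1}}{(q)_{m-i}}\, S_i^k(z_1,z_2,\ldots,z_k)\, P(m-i),
\]
which is the claimed recurrence. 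The initial conditions for $P$ follow from those for $F$ in Theorem~\ref{thm_main_new_recur_k} via the same substitution, completing the proof.
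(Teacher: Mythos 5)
Your proposal is correct and is exactly the argument the paper intends: the paper gives no written proof beyond declaring the corollary immediate from Theorem~\ref{thm_main_new_recur_k}, and your substitution $F(m)=q^{m}P(m)/(q)_m$ into both the sum identity and the recurrence, using $(q)_m=(1-q^m)(q)_{m-1}$, is precisely that immediate deduction. No discrepancy with the paper's approach.
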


Finally, we give an alternate expression for  $ F(m) = F(1,m) $ in the following theorem.
\begin{thm} \label{thm_main_new_recur_k_alternate}
Let 
\begin{equation}\label{eq_new_F_m}
	F(m) = \sum_{m_{1} + m_{2} + \cdots + m_{k} =m} \, \frac{z_1^{m_{1}} q^{(m_{1}^2 + m_1)/2} \, z_2^{m_{2}} q^{(m_{2}^2 + m_2)/2}\, \cdots \cdots z_k^{m_{k}} q^{(m_{k}^2 + m_k)/2}  }{(q)_{m_{1}} (q)_{m_{2}}  \cdots (q)_{m_{k}} }, 
\end{equation}
where $ m_1, m_2, \cdots, m_k $ are non-negative integers. We also assume that $ F(0) =1 $, $ F(m) = 0$  for negative integers $ m $. Then for all positive integers $ m $ 
$ F(m) $ satisfies the following recurrence.
	\begin{align}
F(m) = \frac{q^m}{1-q^m}  \sum_{n=1}^{k}  S_n^k (z_1, z_2, \cdots, z_k) \, F(m-n). 
\end{align}
	
\end{thm}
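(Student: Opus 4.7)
The plan is to identify the right-hand side of \eqref{eq_new_F_m} as the generating function $F(1,m)$ of tilings of the board $F$ (with weight function $F_w(kn+j) = z_j q^{n+1}$) using exactly $m$ black tiles, and then invoke Theorem~\ref{thm_main_new_recur_k} to obtain the recurrence for free. This spares us a direct algebraic manipulation of the multi-index sum.

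First I would decompose the board $F$ into $k$ disjoint sub-boards $F^{(1)}, F^{(2)}, \ldots, F^{(k)}$, where $F^{(j)}$ consists of the positions $\{j,\, k+j,\, 2k+j,\, 3k+j,\, \ldots\}$ of $F$, carrying weights $z_j q, z_j q^2, z_j q^3, \ldots$ respectively. As a weighted $1 \times \infty$ board, $F^{(j)}$ is literally the board considered in Euler's identity \eqref{eq_Euler} with $z$ replaced by $z_j$. Any tiling of $F$ with exactly $m$ black tiles corresponds bijectively to a choice of non-negative integers $(m_1, \ldots, m_k)$ with $m_1 + \cdots + m_k = m$, together with a tiling of each $F^{(j)}$ using exactly $m_j$ black tiles; moreover the total weight factors as the product of the weights on the sub-boards because the black tiles on distinct sub-boards contribute independently.

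By Euler's identity \eqref{eq_Euler} applied to $F^{(j)}$, the generating function for tilings of $F^{(j)}$ using exactly $m_j$ black tiles is $\dfrac{z_j^{m_j} q^{m_j(m_j+1)/2}}{(q)_{m_j}}$. Multiplying over $j = 1, \ldots, k$ and summing over compositions $m_1 + \cdots + m_k = m$ yields exactly the expression in \eqref{eq_new_F_m}. Hence \eqref{eq_new_F_m} coincides with the quantity $F(m) = F(1,m)$ appearing in Theorem~\ref{thm_main_new_recur_k}, which already states that this $F(m)$ satisfies the claimed recurrence. The base cases $F(0) = 1$ and $F(m) = 0$ for $m<0$ match the conventions used there.

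The main obstacle is essentially bookkeeping: confirming that the bijection between tilings of $F$ and tuples of tilings of the $F^{(j)}$ is weight-preserving, and that the position $kn+j$ of $F$ is sent to position $n+1$ of $F^{(j)}$ so that Euler's formula applies with the correct shift. A purely algebraic alternative would be to substitute \eqref{eq_new_F_m} directly into the recurrence and verify it by manipulating the convolutions against $S_n^k(z_1, \ldots, z_k)$, but this is more tedious and obscures the combinatorial meaning of the identity.
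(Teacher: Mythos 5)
Your proposal is correct and follows essentially the same route as the paper's proof: both decompose the board $F$ into the $k$ residue-class sub-boards, apply the Euler generating function \eqref{eq_euler_identity_m} to each, use the weight-preserving bijection to identify the multi-index sum with $F(1,m)$, and then invoke Theorem~\ref{thm_main_new_recur_k} for the recurrence.
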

\begin{proof}
We consider the $ 1 \times \infty $ board $ F $ with its associated weight function given by \eqref{eq_def_weight_fun_k}.
	 The board $ F $ can be viewed as $ k $ different $ 1 \times \infty $ boards, say $ F_j $, for $ j=1,2, \cdots , k $, such that the $ j $-th board $ F_j $ is the board consisting of all the positions of $ F $ which are congruent to $ j \mod k $. The weight at the $ i $-th position of the board $ F_j $ is $ z_j q^i  $. It can easily be checked that this is consistent with the definition of the weight function on $ F $ in \eqref{eq_def_weight_fun_k}. Clearly, the set of all tilings of the original board $ F $ with $ m $ black tiles is in bijection with the set of tilings of the new $ k $ boards, such that the board $ F_j $  is tiled with $ m_j $ black tiles and
	$ \sum_{j=1}^{k} m_j = m $.  The generating function for tiling $ F_j $  with $ m_j $ black tiles is
	 $ \frac{z_j^{m_{j}} q^{(m_{j}^2 + m_j)/2} }{(q)_{m_j}} $,
	 from 	\eqref{eq_euler_identity_m}. It is now obvious that
 \begin{align*}
 		\prod_{m=1}^{\infty} (1 + z_1 q^m) (1 + z_2 q^m) \cdots  (1 + z_k q^m) &= \sum_{m=0}^{\infty} 	 F(m),
 \end{align*}
such that
  \begin{align*}
 	F(m) = \sum_{m_{1} + m_{2} + \cdots + m_{k} =m} \, \frac{z_1^{m_{1}} q^{(m_{1}^2 + m_1)/2} \, z_2^{m_{2}} q^{(m_{2}^2 + m_2)/2}\, \cdots \cdots z_k^{m_{k}} q^{(m_{k}^2 + m_k)/2}  }{(q)_{m_{1}} (q)_{m_{2}}  \cdots (q)_{m_{k}} }.
 \end{align*}
Now the result follows from Theorem \ref{thm_main_new_recur_k}.
 \end{proof}


\end{document}